\documentclass{article}
\usepackage{graphicx} 
\usepackage[top=3cm,bottom=3cm,left=3cm,right=3cm]{geometry}

\usepackage{palatino}
\usepackage{latexsym}

\usepackage[mathscr]{eucal}
\usepackage{amsmath}
\usepackage{amsthm}
\usepackage{amsfonts}
\usepackage{amssymb}
\usepackage{amscd}
\usepackage{color}
\usepackage{graphicx}
\usepackage{graphics}
\usepackage{pifont}
\usepackage{subfigure}
\usepackage[makeroom]{cancel}
\usepackage[normalem]{ulem}
\usepackage[dvipsnames]{xcolor}
\usepackage{tikz}
\usepackage{cite}
\theoremstyle{plain}
\newtheorem{theorem}{Theorem }

\newtheorem{lemma}[theorem]{Lemma}
\newtheorem{proposition}[theorem]{Proposition}

\newtheorem{corollary}[theorem]{Corollary}

\newcommand{\RR}{\mathcal{R}}
\newcommand{\R}{{\mathbb R}}
\newcommand{\N}{{\mathbb N}}

\newcommand{\C}{{\mathbb C}}

\newcommand{\la}{{\langle}}
\newcommand{\ra}{{\rangle}}
\newcommand{\grad}{{\nabla}}
\renewcommand{\div}{\operatorname{div}}

\title{Small-time controllability for the nonlinear Schrödinger equation on $\R^N$ via bilinear electromagnetic fields}
\author{Alessandro Duca,\footnote{Universit\'e de Lorraine, CNRS, INRIA, IECL, F-54000 Nancy, France (alessandro.duca@inria.fr)}\and{Eugenio Pozzoli\footnote{Univ Rennes, CNRS, IRMAR - UMR 6625, 35000 Rennes, France (eugenio.pozzoli@univ-rennes.fr)}}}

\begin{document}

\maketitle
\begin{abstract}
We address the small-time controllability problem for a nonlinear Schr\"odinger equation (NLS) on $\R^N$ in the presence of magnetic and electric external fields. We choose a particular framework where the equation becomes $i\partial_t \psi = [-\Delta+u_0(t)h_{\vec{0}}+\la u(t), P\ra +\kappa|\psi|^{2p}]\psi$. Here, the control operators are defined by the zeroth Hermite function $h_{\vec{0}}(x)$ and the momentum operator $P=i\grad$. In detail, we study when it is possible to control the dynamics of (NLS) as fast as desired via sufficiently large control signals $u_0$ and $u$. We first show the existence of a family of quantum states for which this property is verified. Secondly, by considering some specific states belonging to this family, as a physical consequence we show the capability of controlling arbitrary changes of energy in bounded regions of the quantum system, in time zero. Our results are proved by exploiting the idea that the nonlinear term in (NLS) is only a perturbation of the linear problem when the time is as small as desired. 
The core of the proof, then, is the controllability of the bilinear equation which is tackled by using specific non-commutativity properties of infinite-dimensional propagators.
\end{abstract}


\textbf{Keywords.} Nonlinear Schr\"odinger equation, approximate controllability, small-time quantum control.

\textbf{MSCcodes.} 35Q55, 81Q93, 93B05

\section{Introduction}
\subsection{The 
model}
The dynamics of a quantum particle moving in the Euclidean space subject to external electric and magnetic fields, and state nonlinearity, can be described via the following nonlinear Schr\"odinger equation 
\begin{equation}\label{eq:schro_intro}
i \frac{\partial}{\partial t}\psi=\left[-\big(\div+iA\big)\circ\big(\grad+iA\big)  +E+\kappa|\psi|^{2p}\right]\psi,
\end{equation}
where $\psi=\psi(x,t), (x,t)\in\mathbb{R}^N\times[0,T]$, $\kappa\in\mathbb{R}$, $p\in\N$ and $T>0$.  The multipolarized magnetic field is represented by the $\mathbb{R}^N$-valued function $A$ which, in our model, only depends on time $$A=(A_1,...,A_N):[0,T]\longrightarrow \R^N.$$ The scalar electric field, instead, is modeled by the $\mathbb{R}$-valued function $E$ depending on both time and space: $$E:[0, T]\times \R^N\longrightarrow \R.$$
The particle is represented by the quantum state $\psi$ evolving in the unit sphere of $L^2(\R^N,\C)$. An example of such evolution is the celebrated Gross-Pitaevskii equation when $p=1$.
This paper aims to investigate small-time controllability properties for \eqref{eq:schro_intro} via suitable electromagnetic fields. We consider suitable quantum states and we show that is possible to control them as fast as desired via sufficiently large electromagnetic fields $A$ and $E$. 

\smallskip

In this paper, we consider the equation \eqref{eq:schro_intro} in a particular case where it can be rewritten as the following nonlinear Schr\"odinger equation on the Hilbert space $H^s(\mathbb{R}^N,\mathbb{C})$,
\begin{equation}\label{eq:schro}\tag{NLS}\begin{cases}
i \dfrac{\partial}{\partial t}\psi(x,t)=\left[-\Delta+u_0(t)h_{\vec{0}}(x)+\la u(t), P\ra +\kappa|\psi(x,t)|^{2p}\right]\psi(x,t),\quad & \\
\psi(\cdot,0)=\psi_0(\cdot)\in H^s(\mathbb{R}^N,\mathbb{C}), \quad (x,t)\in \mathbb{R}^N\times[0,T],&
\end{cases}
\end{equation}
where $u_0:[0,T]\longrightarrow \R$ and $u=(u_1,...,u_N):[0,T]\longrightarrow \R^N$ and
$$\Delta=\sum_{l=1}^N\frac{\partial^2}{\partial {x_l}^2},\quad  \quad P=i \grad=i\left(\frac{\partial}{\partial {x_1}},\dots,\frac{\partial}{\partial {x_N}}\right), \quad \quad h_{\vec{0}}(x)=\pi^{-\frac{N}{4}}e^{-\frac{|x|^2}{2}}.$$
These are respectively the kinetic energy operator (that is, the Laplacian $\Delta$), the momentum operator $P$ (its components will also be denoted as $P_j=i\partial/\partial {x_j}$), and the zeroth $N$-dimensional Hermite function $h_{\vec{0}}$. 
The equation \eqref{eq:schro} can be rewritten in the form of \eqref{eq:schro_intro} by choosing (see Section \ref{sec:preliminaries} for further details)
$$A(t)=
-\frac{1}{2}(u_1(t),...,u_N(t)),\quad \quad \quad E(t,x)=u_0(t)h_{\vec{0}}(x)-\frac{1}{4}|u|^2.$$

\smallskip

In the framework of \eqref{eq:schro}, the particle is experiencing two external control fields: the first one is coupled to the momentum of the particle $P$, and the second one is coupled to a function of its position $x$, which is $h_{\vec{0}}(x)$. Notice that the choice of the control operator $h_{\vec{0}}$ is mathematically convenient (see Section \ref{sec:saturation} for more details) and is realistic from a physical point of view, as the dipolar interaction is concentrated on small values of the position $x$, and becomes weak for large $x$.
The time-dependent functions $u_0,...,u_N$ can be freely chosen among the piecewise constant functions, and play the role of control laws which steer the quantum dynamics toward desired targets.

\subsection{The main results}
 The first main result of the work is a specific small-time approximate controllability property which is stated in the following theorem.
\begin{theorem}\label{thm:main-result}
Let $s,p\in\N$, $s>N/2$, and $\kappa\in\R$. Consider any initial state $\psi_0\in H^s(\mathbb{R}^N,\mathbb{C})$ and any $\phi\in H^{2s}(\mathbb{R}^N,\mathbb{R})$. Then, for any positive error and time $\varepsilon,T>0$, there exist a smaller time $\tau\in[0,T)$ and piecewise constant controls $(u_0,u):[0,\tau]\to \mathbb{R}^{N+1}$ such that the solution $\psi(t,(u_0,u),\psi_0)$ of \eqref{eq:schro} with initial condition $\psi_0$ and controls $(u_0,u)$ satisfies
$$\|\psi(\tau,(u_0,u),\psi_0)-e^{i\phi}\psi_0\|_{H^s(\R^N)}<\varepsilon. $$
\end{theorem}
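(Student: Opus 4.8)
The plan is to split the statement into three essentially independent pieces: a perturbative estimate showing that over a short enough time window the power nonlinearity is negligible, a small-time controllability statement for the \emph{bilinear} (linear-in-$\psi$) equation obtained from non-commutativity of propagators, and a density argument in the phase $\phi$. For Step~1, fix piecewise constant controls on $[0,\tau]$ and let $\mathcal{U}(t,0)$ be the propagator of the linear controlled equation $i\partial_t\psi=[-\Delta+u_0(t)h_{\vec{0}}+\la u(t),P\ra]\psi$, which is well defined since $-\Delta+\la u,P\ra$ is self-adjoint and $u_0 h_{\vec{0}}$ is a bounded perturbation. I would write the solution of \eqref{eq:schro} as $\psi(t)=\mathcal{U}(t,0)\xi(t)$, so that $\xi$ solves $i\partial_t\xi=\kappa\,\mathcal{U}(t,0)^{-1}\big(|\mathcal{U}(t,0)\xi|^{2p}\mathcal{U}(t,0)\xi\big)$, $\xi(0)=\psi_0$. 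Since $s>N/2$, $H^s(\R^N)$ is a Banach algebra on which $z\mapsto|z|^{2p}z$ acts smoothly; combining this with $L^2$-unitarity of $\mathcal{U}$ and the $H^s$-operator bound $K:=\sup_{t\in[0,\tau]}\|\mathcal{U}(t,0)\|_{\mathcal{L}(H^s)}$, a Gronwall argument gives, for $\tau$ small, $\|\xi(t)\|_{H^s}\le 2\|\psi_0\|_{H^s}$ and
\[
\|\psi(\tau)-\mathcal{U}(\tau,0)\psi_0\|_{H^s}\le C\,|\kappa|\,K^{2p+2}\,\|\psi_0\|_{H^s}^{2p+1}\,\tau .
\]
The essential point — and the reason to track $K$ — is that this is $o(1)$ as $\tau\to 0$ only if $K$ stays bounded, which I would secure in Step~2.

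For Step~2 (small-time realization of elementary unitaries): on an interval of length $\tau$ with $u_0\equiv c/\tau$ and $u\equiv 0$, the linear propagator equals $e^{-i(\tau(-\Delta)+c\,h_{\vec{0}})}$, which converges as $\tau\to0$, strongly in $H^s$, to the unitary multiplication operator $e^{-ic\,h_{\vec{0}}}$ (Duhamel/Dyson expansion, using that $h_{\vec{0}}$ is Schwartz hence bounded on $H^s$, and that $e^{it\Delta}\to I$ strongly on $H^s$), and $\|e^{-i(\tau(-\Delta)+c\,h_{\vec{0}})}\|_{\mathcal{L}(H^s)}\le e^{|c|\,\|h_{\vec{0}}\|_{\mathcal{L}(H^s)}}$ \emph{uniformly in $\tau$}, since $e^{it\Delta}$ is an $H^s$-isometry and contributes nothing to the $H^s$-growth. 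On an interval of length $\tau$ with $u_0\equiv0$ and $u\equiv w/\tau$, since $-\Delta$ and $\la u,P\ra$ commute the propagator is \emph{exactly} $e^{-i\tau(-\Delta)}e^{-i\la w,P\ra}$, whose second factor is the isometric translation $f\mapsto f(\cdot+w)$ and whose first tends strongly to $I$. Because $e^{i\la a,P\ra}e^{-ic\,h_{\vec{0}}}e^{-i\la a,P\ra}=e^{-ic\,h_{\vec{0}}(\cdot-a)}$, concatenating finitely many such blocks realizes, in arbitrarily small total time and with $H^s$-propagator norm bounded by $K\le\exp\!\big(\|h_{\vec{0}}\|_{\mathcal{L}(H^s)}\sum_k|c_k|\big)$ uniformly in $\tau$, the unitary
\[
\prod_{k=1}^M e^{-ic_k h_{\vec{0}}(\cdot-a_k)}=e^{-i\sum_{k=1}^M c_k h_{\vec{0}}(\cdot-a_k)},
\]
the interspersed $e^{-i\tau_j(-\Delta)}$ factors disappearing in the limit.

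For Step~3 (density and conclusion): the Fourier transform of $h_{\vec{0}}$ is a nowhere-vanishing Gaussian, so the real linear span of $\{h_{\vec{0}}(\cdot-a):a\in\R^N\}$ is dense in $H^{2s}(\R^N,\R)$; I would pick $g=\sum_k c_k h_{\vec{0}}(\cdot-a_k)$ with $\|g+\phi\|_{H^{2s}}$ small enough that $\|e^{-ig}\psi_0-e^{i\phi}\psi_0\|_{H^s}<\varepsilon/3$, using that $f\mapsto e^{if}\psi_0$ is continuous from $H^{2s}$ to $H^s$ (here $2s>N/2$: $e^{if}-1\in H^{2s}$ by the algebra property and $H^{2s}\cdot H^s\subset H^s$, which is exactly why the hypothesis is $\phi\in H^{2s}$). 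With $g$, hence $M$, the $c_k$, the $a_k$, and $K$ now fixed, Step~2 furnishes $\tau<T$ and controls with $\|\mathcal{U}(\tau,0)\psi_0-e^{-ig}\psi_0\|_{H^s}<\varepsilon/3$ and $\sup_t\|\mathcal{U}(t,0)\|_{\mathcal{L}(H^s)}\le K$; shrinking $\tau$ further, Step~1 gives $\|\psi(\tau)-\mathcal{U}(\tau,0)\psi_0\|_{H^s}<\varepsilon/3$, and the triangle inequality concludes.

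The expected main obstacle is the interaction between Steps~1 and 2: the controls needed in Step~2 grow like $1/\tau$, and one must verify that this does \emph{not} amplify the nonlinear error. The factorization $\psi=\mathcal{U}\xi$ removes the explicit control dependence from the $\xi$-equation, while the structural facts that $e^{it\Delta}$ and the translations are $H^s$-isometries, and that each Gaussian block contributes only a $\tau$-independent Gronwall factor, keep $K$ bounded along the relevant family of controls. A secondary technical point is upgrading the small-time strong-operator limits from $L^2$ to $H^s$, which is precisely where $s>N/2$ and the Schwartz regularity of $h_{\vec{0}}$ enter.
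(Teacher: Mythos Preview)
Your argument is correct and takes a genuinely different route from the paper. The paper's proof hinges on the infinitesimal commutator limit $e^{i\varphi/\tau}e^{-i\tau P_j}e^{-i\varphi/\tau}\psi_0\to e^{-(P_j\varphi)}\psi_0$ (Proposition~\ref{lemma:main-tool}): starting from $\varphi=h_{\vec 0}$ this produces $x_jh_{\vec 0}$, and an inductive saturation along the spaces $\mathcal H_n$ (Section~\ref{sec:saturation}) then generates every Hermite function, whose span is dense in $H^{2s}(\R^N,\R)$; the nonlinearity is handled limit by limit (Proposition~\ref{lemma:limit-nonlinear1}) and the theorem is proved by induction on $n$. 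You instead exploit the \emph{exact} group-conjugation identity $e^{i\langle a,P\rangle}e^{-ich_{\vec 0}}e^{-i\langle a,P\rangle}=e^{-ich_{\vec 0}(\cdot-a)}$, available because $P$ generates translations, which bypasses the iterated Lie brackets and yields all Gaussian translates in one stroke; density then follows from the nowhere-vanishing Fourier transform of $h_{\vec 0}$ (a Wiener-type argument) rather than from Hermite completeness. Your treatment of the nonlinearity via the interaction picture $\psi=\mathcal U\xi$, together with the $\tau$-uniform bound $K\le\exp(\|h_{\vec 0}\|_{\mathcal L(H^s)}\sum_k|c_k|)$ coming from the fact that $e^{it\Delta}$ and the translations are $H^s$-isometries, is also more modular than the paper's case-by-case Proposition~\ref{lemma:limit-nonlinear1}. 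The trade-off: your route is shorter and more elementary precisely because the momentum control exponentiates to an explicit action on multiplication operators, while the paper's Lie-bracket saturation is the template that would survive if $P$ were replaced by a control operator without such an exact conjugation formula.
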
 

Theorem \ref{thm:main-result} allows to steer, as fast as desired, any sufficiently smooth quantum state to any other state obtained by multiplying it with an imaginary exponential function. Notice that this operation is not just a change of phase since the function $\phi$ is not a constant. We consider the parameter $s>N/2$ so that the equation \eqref{eq:schro} is locally-in-time well-posed in the space $H^s(\R^N,\C)$, and a final phase $\phi\in H^{2s}(\R^N,\R)$ so that $e^{i\phi}\psi_0\in H^s(\R^N,\C)$.  

Notice that already in the linear case (i.e. when $\kappa=0$) Theorem \ref{thm:main-result} is new: in this case, the result is also valid in $ H^s(\R^N,\C)$ for $s\leq N/2$ since the system is well-posed also in this lower regular cases.

\smallskip

As a physical consequence, Theorem \ref{thm:main-result} implies the capability of controlling arbitrary changes of energy in bounded regions of the quantum system, in time approximately zero. The energy of a state here is defined in the linear case, but we state the result in the more general nonlinear setting. In the linear case $\kappa=0$, we define the energy of a state $\psi$ in a region $S\subset \R^N$ as $-\la\Delta\psi,\psi\ra_{L^2(S)}$. Given a bounded set $S\subset \R^N$ with finite Lebesgue measure $|S|<\infty$, we introduce the states
\begin{equation*}
\phi_{\xi,S}=\frac{\rho_S}{|S|}e^{i\xi x},\quad \xi\in\mathbb{R}^N,
\end{equation*} 
where $\rho_S$ is any smooth function with compact support such that $S\subset{\rm supp}(\rho_S)$ and $\rho_S(x)=1$ on $S$. Notice that, in the linear case, $\phi_{\xi,S}$ has energy $|\xi|^2$ in $S$. We then have the following result.
\begin{corollary}\label{thm:eigenmodes}
Let $s,p\in\N$, $s>N/2$, and $\kappa\in\R$. Let $S\subset \R^N$ with $|S|<\infty$, and $\xi,\nu\in\mathbb{R}^N$ be two frequencies. Then, for any positive error and time $\varepsilon,T>0$, there exist a smaller time $\tau\in[0,T)$ and a piecewise constant control $(u_0,u):[0,\tau]\to \mathbb{R}^{N+1}$ such that the solution $\psi(t,(u_0,u),\phi_{\xi, S})$ of \eqref{eq:schro} with initial condition $\phi_{\xi, S}$ and control $(u_0,u)$ satisfies
$$\|\psi(\tau,(u_0,u),\phi_{\xi, S})-\phi_{\nu, S}\|_{H^s(S)}<\varepsilon. $$
\end{corollary}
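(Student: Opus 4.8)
The plan is to deduce Corollary~\ref{thm:eigenmodes} directly from Theorem~\ref{thm:main-result}, applied to the initial datum $\psi_0=\phi_{\xi,S}$ and a carefully chosen phase $\phi$. The key observation is that on the region where $\rho_S\equiv 1$ — which we may take to be an open neighborhood $\mathcal{O}$ of $\overline{S}$ — one has $\phi_{\xi,S}=|S|^{-1}e^{i\xi x}$ and $\phi_{\nu,S}=|S|^{-1}e^{i\nu x}$, so that there $\phi_{\nu,S}=e^{i(\nu-\xi)\cdot x}\,\phi_{\xi,S}$. Hence, \emph{on $S$}, the target is obtained from the initial state by multiplication by the imaginary exponential of the linear function $x\mapsto(\nu-\xi)\cdot x$. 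This function is smooth but unbounded, hence not in $H^{2s}(\R^N,\R)$, so Theorem~\ref{thm:main-result} cannot be invoked with it directly; this is precisely the reason the conclusion is stated only on the bounded set $S$.

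First I would fix a cutoff $\chi\in C^\infty_c(\R^N,\R)$ with $\chi\equiv 1$ on $\mathcal{O}$ and set $\phi(x):=\chi(x)\,(\nu-\xi)\cdot x$. Then $\phi\in C^\infty_c(\R^N,\R)\subset H^{2s}(\R^N,\R)$, and by construction $\phi(x)=(\nu-\xi)\cdot x$ on $\mathcal{O}$. Moreover $\phi_{\xi,S}=|S|^{-1}\rho_S\,e^{i\xi x}\in C^\infty_c(\R^N,\C)\subset H^s(\R^N,\C)$, so the hypotheses of Theorem~\ref{thm:main-result} are satisfied.

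Next, given $\varepsilon,T>0$, Theorem~\ref{thm:main-result} provides $\tau\in[0,T)$ and piecewise constant controls $(u_0,u):[0,\tau]\to\R^{N+1}$ with
$$\|\psi(\tau,(u_0,u),\phi_{\xi,S})-e^{i\phi}\phi_{\xi,S}\|_{H^s(\R^N)}<\varepsilon.$$
Since restriction to $S$ does not increase the $H^s$ norm, the same bound holds with $H^s(S)$ in place of $H^s(\R^N)$. On the neighborhood $\mathcal{O}$ of $\overline{S}$ we have $e^{i\phi}\phi_{\xi,S}=e^{i(\nu-\xi)\cdot x}|S|^{-1}e^{i\xi x}=|S|^{-1}e^{i\nu x}=\phi_{\nu,S}$, together with all their derivatives, so $\|e^{i\phi}\phi_{\xi,S}-\phi_{\nu,S}\|_{H^s(S)}=0$. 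A triangle inequality then yields $\|\psi(\tau,(u_0,u),\phi_{\xi,S})-\phi_{\nu,S}\|_{H^s(S)}<\varepsilon$, which is the claim.

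The only genuine obstruction is the one identified above: the honest phase relating the two states is linear, hence unbounded, hence outside $H^{2s}$; it is bypassed by localizing, i.e. altering the phase away from $\mathcal{O}$, which is free of charge because the error is measured only on $S$. The one detail to be careful about is that the cutoff $\chi$ must equal $1$ on an \emph{open} set containing $\overline{S}$ (not merely on $S$), so that $e^{i\phi}\phi_{\xi,S}$ and $\phi_{\nu,S}$ agree together with all their derivatives on $S$ and the error term vanishes exactly in $H^s(S)$.
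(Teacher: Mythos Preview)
Your proof is correct and follows essentially the same approach as the paper: both localize the linear phase $(\nu-\xi)\cdot x$ by a smooth compactly supported cutoff so that Theorem~\ref{thm:main-result} applies, and then observe that on $S$ the resulting state coincides with $\phi_{\nu,S}$. The only cosmetic difference is that the paper reuses $\rho_S$ itself as the cutoff, setting $\phi(x)=(\nu-\xi)\cdot x\,\rho_S(x)$, whereas you introduce a separate $\chi$; your extra care in requiring the cutoff to equal $1$ on an open neighborhood of $\overline{S}$ (so that all derivatives match on $S$) is a welcome precision.
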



\smallskip

Similar results can be found in the work \cite{duca-nersesyan} by Nersesyan and the first author, and in \cite{small-time-molecule,small-time-wave} by Chambrion and the second author. Nevertheless, they all deal with different frameworks from the one considered here. 
From this perspective, the main novelties of Theorem \ref{thm:main-result} are the following. 

\begin{itemize}

    \item Bilinear controls, not only for the Schr\"odinger equation, but also for the heat or the wave equations, are usually studied on compact manifolds. Indeed, most of the classical techniques rely upon spectral techniques needing a drift with point spectrum only. 
    Our approach, inspired by \cite{duca-nersesyan,small-time-molecule,small-time-wave}, does not require such features and can also be applied in systems where the drift has a continuous spectrum as in (NLS). 

\item Up to the recent works \cite{duca-nersesyan,small-time-molecule,small-time-wave}, classical approximate controllability results have been proved in large times. 
In our main results, the controllability is as fast as desired and the obvious price to pay is that the corresponding control amplitudes are more and more large. 

\item The controllability properties of Theorem \ref{thm:main-result} and Corollary \ref{thm:eigenmodes} are ensured despite the non-linear behaviour of the dynamics. Up to \cite{duca-nersesyan}, most of the existing works on the subject can only deal with the linear case. Here, we improve the techniques from \cite{small-time-molecule,small-time-wave} to treat also non-linear equations.

\end{itemize}

\subsection{The technique}
The control strategy to show Theorem \ref{thm:main-result} is explicit, and consists in applying large controls in short time intervals (which is natural, since we want to control the system in small times). These kinds of techniques are well-known in finite-dimensional geometric control, where large controls on short time intervals are usually considered to avoid the effect of the drift on the dynamics \cite{jurdje-kupka,glaser-brockett,domenico}. 

\smallskip

From a PDE perspective, this technique is inspired by the work of the first author and Nersesyan \cite{duca-nersesyan}, previously introduced, which treats the bilinear control of nonlinear Schr\"odinger equations on tori using low mode forcing. There, the authors leveraged the drift term to prove small-time approximate controllability among eigenstates. In this paper, besides considering a different unbounded model, we exploit different properties of non-commutativity of infinite-dimensional propagators: instead of leveraging the drift generated by the Laplacian $-\Delta$, we get rid of it and exploit the momentum operator $P$ to control the quantum system.

\smallskip

More precisely, if one forgets about the nonlinearity, the main idea of the proof is to consider the following small-time limit of conjugated dynamics:
\begin{equation}\label{eq:limit-intro}
\lim_{\tau\to 0} e^{ih_{\vec{0}}/\tau}e^{-i\tau P_j}e^{-ih_{\vec{0}}/\tau}\psi_0=e^{-P_jh_{\vec{0}}}\psi_0.
\end{equation}
This allows to gain a new non-directly accessible direction where the system can be steered: in analogy to finite-dimensional systems, we may regard $h_{\vec{0}}$ and $P_j$ as two linearly independent and directly accessible directions for the control system; then 
$$-P_jh_{\vec{0}}=i\pi^{-N/4}x_je^{-|x|^2/2}$$
is the first Hermite function in the $x_j$-variable and the zeroth in the other variables. Hence we generate a linearly independent direction which was not directly accessible. This new direction is defined by (minus) the commutator of the first two (an operation also referred to as Lie bracket in geometric control): 
$$[P_j,h_{\vec{0}}]\psi_0:=(P_jh_{\vec{0}}-h_{\vec{0}}P_j)\psi_0=-i\pi^{-N/4}x_je^{-|x|^2/2}\psi_0.$$
 We then iterate this strategy to generate any linear combination of Hermite functions (this procedure is usually called \emph{saturation} and has been introduced by Agrachev and Sarychev for controlling Navier-Stokes equations \cite{navier-stokes}, and we refer also to their recent work \cite{agrachev-ensemble} where Hermite functions are specifically considered for saturating purposes): being this set dense, we obtain the small-time approximate controllability property stated in Theorem \ref{thm:main-result} in the linear case. The result in the nonlinear case then follows by showing that the nonlinearity is a perturbation that does not influence the small-time limit \eqref{eq:limit-intro} (we refer to Proposition \ref{lemma:limit-nonlinear1} for a precise statement in the nonlinear case). 
\subsection{
Other literature}

The study of the controllability properties of bilinear Schr\"odinger equations plays an important role in many applications, due to the relevance of quantum phenomena in physics, chemistry, engineering, and information science. In particular, much attention has been devoted to this kind of system due to its fundamental relevance in the development of quantum technologies \cite{Glaser2015}.

\smallskip

From a mathematical point of view, 
many controllability results 
have been obtained in the last two decades. Local exact controllability of Schrödinger equations in higher Sobolev norms was firstly proven by Beauchard in \cite{beauchard1}. For local exact controllability results in the presence of state nonlinearity, see also the papers \cite{laurent,beauchard-2015}. We also mention the recent paper of Bournissou \cite{bournissou} on the local exact controllability of bilinear Schrödinger equations with Lie bracket techniques. For the global approximate controllability, the first achievements were made by Adami, Boscain, Chambrion, Mason, and Sigalotti \cite{adami,BCMS} and by Nersesyan \cite{nersesyan}. All of those works rely on the discreteness of the drift spectrum. 

\smallskip

Different quantum controllability properties (e.g., control between bound states) in the presence of a continuous spectrum in the drift as in the case of our work were studied by Mirrahimi \cite{mirra2}, and Chambrion \cite{chambrion}. 

\smallskip

The problem of minimizing the controllability time, and more in general proving small-time controllability properties, 
is of prime importance in quantum mechanics, with very few results available in the PDE setting. Boussaïd, Caponigro, and Chambrion \cite{minimal-time-thomas} showed that a particular bilinear conservative equation on the circle is globally approximately controllable in small times. Beauchard, Coron, and Teismann \cite{minimal-time-coron,minimal-time-approximate} proved an obstruction to small-time approximate controllability for bilinear Schr\"odinger equations with sub-quadratic uncontrolled potential (which is instead approximately controllable in large times): the main difference between their obstruction and the positive result obtained in Theorem \ref{thm:main-result} is in the choice of the control operators. More precisely, the crucial saturation obtained here in Section \ref{sec:saturation} is based on the properties of the function $h_{\vec{0}}$, and cannot hold if one considered instead only functions such as $x_j$ as control operators. We also refer to the recent work \cite{coron-small-semiclassical} by Coron, Xiang, and Zhang, where the geometric technique of saturation through low mode forcing is used for small-time control of semiclassical
bilinear Schr\"odinger equations.

We additionally notice that the capability of controlling quantum evolutions by means of their momentum (instead of the more standard position operator) was speculated e.g. in the work of Boscain, Mason, Panati, and Sigalotti \cite[Section I.C]{panati} on spin-boson systems, and the present paper thus provides some insights also in this direction.

We conclude with a comment on the results obtained in this paper. The problem of global small-time approximate controllability of (NLS) (between more general states than the ones obtained in Theorem \ref{thm:main-result}) remains open, and we plan to elucidate it in future investigations. We do not expect the family of small-time approximately reachable states described in Theorem \ref{thm:main-result} to be optimal: the techniques and the results presented here can probably be used to show additional small-time controllability properties of Schrödinger equations, and thus enlarge the above-mentioned family. Nevertheless, with the current techniques, we are still not able to conjecture any positive (nor negative) results concerning the (more general) global small-time approximate controllability problem of (NLS).




\subsection{Structure of the paper} The paper is organized as follows. In Section \ref{sec:preliminaries}, we recall the notion of solution for \eqref{eq:schro}. In Section \ref{sec:small-time}, we prove the small-time limit of conjugated dynamics \eqref{eq:limit-intro}, and related ones, in the linear case. In Section \ref{sec:NLS} we extend the validity of the small-time limits to the nonlinear case. In Section \ref{sec:saturation}, we show a density property of the directions where the control system can be steered. We conclude in Section \ref{sec:proof} by proving Theorem \ref{thm:main-result}.

\textbf{Acknowledgments.} The authors would like to thank Ugo Boscain, Nabile Boussa\"id, Thomas Chambion, David Dos Santos Ferreira, and Vahagn Nersesyan for fruitful conversations.

E.P. acknowledges support by the PNRR MUR project PE0000023-NQSTI. This work was also part of the project CONSTAT, supported by the Conseil Régional de Bourgogne Franche- Comté and the European Union through the PO FEDER Bourgogne 2014/2020 programs, by the French ANR through the grant QUACO (ANR-17-CE40-0007-01) and by EIPHI Graduate School (ANR-17-EURE-0002).

\section{Preliminaries and well-posedness}\label{sec:preliminaries}

We start by showing the computations allowing to rewrite the equation \eqref{eq:schro_intro} in the form of \eqref{eq:schro} when 
\begin{equation}\label{fields}A(t)=
-\frac{1}{2}(u_1(t),...,u_N(t)),\quad \quad \quad E(t,x)=u_0(t)h_{\vec{0}}(x)-\frac{1}{4}|u|^2,
\end{equation}
as presented in the introduction. We firstly rewrite the equation \eqref{eq:schro_intro} in the form
\begin{equation}\label{eq:preliminaries_1}\begin{split}
i \frac{\partial}{\partial t}\psi&= -\div(\grad \psi)  - i\div(A(t)\psi)     - i\la A(t), \grad\psi\ra  +|A(t)|^2\psi+E(x,t)\psi+\kappa|\psi|^{2p}\psi.\\
\end{split}\end{equation}
Notice that the magnetic field $A(t)$ only depends on time and, then, we have
$$\div(A(t)\psi)= \div(A(t))\psi + \la A(t), \grad \psi\ra =\la A(t), \grad \psi\ra.$$
The last identity allows to rewrite the equation \eqref{eq:preliminaries_1} as follows 
\begin{equation*}\begin{split}
i \frac{\partial}{\partial t}\psi&= -\Delta\psi  - 2i\la A(t), \grad \psi\ra  +|A(t)|^2\psi+E(x,t)\psi+\kappa|\psi|^{2p}\psi\\
&= -\Delta\psi +  \la - 2A(t), i\grad \psi\ra  +\big(|A(t)|^2+E(x,t)\big)\psi+\kappa|\psi|^{2p}\psi.\\
\end{split}\end{equation*}
Finally, we can write $i\grad = P$, $-2A(t)=u(t)$ and $|A(t)|^2+E(x,t)=u_0(t)h_{\vec{0}}(x)$
thanks to \eqref{fields}, which lead to the nonlinear Schr\"odinger equation \eqref{eq:schro}.

\smallskip

In the next proposition, we recall a local-in-time well-posedness result of the nonlinear Schrödinger equation adapted to our setting. 


\begin{proposition}\label{prop:well}
Let $s,p\in\N$, $s>N/2$, and $\kappa\in\R$. For any $\psi_0\in H^s(\mathbb{R}^N,\mathbb{C})$, and any $(u_0,u)\in L^1_{\rm loc}((0,\infty),\R^{N+1})$, there exist a maximal time $T>0$ and a unique solution 
$$\psi(t)=e^{it\Delta-i\la \int_{0}^tu(s)ds,P\ra)}\psi_0-i\int_{0}^t\!\!\!e^{i(t-\tau)\Delta-i\la \int_0^{t-\tau}u(s)ds,P\ra}(u_0(\tau)\vec{h_0}+\kappa|\psi(\tau)|^{2p})\psi(\tau)d\tau,$$
in $C^0([0,T),H^s(\R^N))$, of the problem \eqref{eq:schro} with initial state $\psi_0$. 

Denote by $\RR$ the propagator of the Schr\"odinger equation \eqref{eq:schro} relating any initial state $\psi_0$ and any control $(u_0,u)$ to the corresponding solution $R(t,\psi_0,(u_0,u))=\psi(t)$. For any other $\psi_1\in H^s(\R^N,\C)$, let $T>0$ be the maximal time such that both $R(t,\psi_0,(u_0,u))$ and $R(t,\psi_1,(u_0,u))$ exist for times strictly smaller than $T$. Then, we have continuity w.r.t. the initial data, that is: there exists $C=C(u_0,t,N,s,p,\kappa,\vec{h_0})\geq 0$ such that  
 \begin{align*}
\|\RR(\cdot,\psi_0,(u_0,u)) -\RR(\cdot,\psi_1,(u_0,u))\|_{C^0([0,T),H^s(\R^N))}
\leq C  \|\psi_0-\psi_1\|_{H^s(\R^N)}
\end{align*}

\end{proposition}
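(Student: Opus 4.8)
The plan is to prove Proposition~\ref{prop:well} by a standard fixed-point argument in $C^0([0,T],H^s(\R^N))$ for suitably small $T$, exploiting two facts: the linear electromagnetic propagator is an isometry on each Sobolev space, and the nonlinearity $\psi\mapsto|\psi|^{2p}\psi$ is locally Lipschitz on $H^s$ because $H^s(\R^N)$ is a Banach algebra for $s>N/2$. Concretely, I would first record properties of the free linear flow. Writing $U(t):=e^{it\Delta}$ and noting that the momentum-type factor $e^{-i\la\int_0^tu,\,P\ra}$ is the unitary translation $\psi\mapsto\psi(\cdot-\int_0^t u(s)\,ds)$, the composition $V(t):=e^{it\Delta-i\la\int_0^t u,P\ra}$ is a (non-autonomous, but explicit) family of unitary operators on every $H^\sigma(\R^N)$; in particular it is strongly continuous in $t$ and $\|V(t)\phi\|_{H^s}=\|\phi\|_{H^s}$. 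This reduces well-posedness to the Duhamel integral equation stated in the proposition. Defining the map $\Phi(\psi)(t):=V(t)\psi_0-i\int_0^t V(t-\tau)\big(u_0(\tau)h_{\vec{0}}+\kappa|\psi(\tau)|^{2p}\big)\psi(\tau)\,d\tau$, I would show $\Phi$ maps the ball $B_R:=\{\psi\in C^0([0,T],H^s):\|\psi\|\le R\}$ into itself and is a contraction there, for $R:=2\|\psi_0\|_{H^s}$ and $T=T(R,\|u_0\|_{L^1(0,T)},N,s,p,\kappa)$ small enough.

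The key estimates are the algebra/Moser-type bounds. Since $H^s$ is an algebra, $\|fg\|_{H^s}\lesssim\|f\|_{H^s}\|g\|_{H^s}$, so $\|h_{\vec{0}}\psi\|_{H^s}\lesssim\|h_{\vec{0}}\|_{H^s}\|\psi\|_{H^s}$ and $\||\psi|^{2p}\psi\|_{H^s}\lesssim\|\psi\|_{H^s}^{2p+1}$; for the difference one uses $\big||\psi|^{2p}\psi-|\varphi|^{2p}\varphi\big|\lesssim(|\psi|^{2p}+|\varphi|^{2p})|\psi-\varphi|$ together with the algebra property (and, if $2p$ is not an even integer this would need a little care, but here $p\in\N$ so the nonlinearity is polynomial in $\psi,\bar\psi$, which is the easy case) to get the Lipschitz bound $\||\psi|^{2p}\psi-|\varphi|^{2p}\varphi\|_{H^s}\lesssim(\|\psi\|_{H^s}^{2p}+\|\varphi\|_{H^s}^{2p})\|\psi-\varphi\|_{H^s}$ on $B_R$. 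Plugging these into the Duhamel formula and using unitarity of $V$, one obtains $\|\Phi(\psi)(t)\|_{H^s}\le\|\psi_0\|_{H^s}+C T\big(\|u_0\|_{L^1(0,T)}+R^{2p}\big)R$ and $\|\Phi(\psi)-\Phi(\varphi)\|_{C^0([0,T],H^s)}\le C T\big(\|u_0\|_{L^1(0,T)}+R^{2p}\big)\|\psi-\varphi\|_{C^0([0,T],H^s)}$; choosing $T$ so that $C T(\|u_0\|_{L^1(0,T)}+R^{2p})\le 1/2$ gives a unique fixed point in $B_R$, which is the desired local solution. Maximality and uniqueness in the whole class $C^0([0,T'),H^s)$ follow by the usual continuation argument: one extends the solution as long as the $H^s$ norm stays finite, and two solutions agreeing on an interval agree on its closure by the contraction estimate (a Gronwall-type argument).

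For the continuity statement with respect to initial data, I would run the same Duhamel identity for two solutions $\RR(\cdot,\psi_0,(u_0,u))$ and $\RR(\cdot,\psi_1,(u_0,u))$. On any interval $[0,T)$ on which both exist, their $H^s$ norms are bounded by some constant $M$ depending on the data; subtracting the two Duhamel formulas, using unitarity of $V$ and the local Lipschitz bounds above with constant governed by $M$, gives
\[
\|\psi(t)-\tilde\psi(t)\|_{H^s}\le\|\psi_0-\psi_1\|_{H^s}+C\int_0^t\big(\|u_0\|_{L^\infty}+M^{2p}\big)\|\psi(\tau)-\tilde\psi(\tau)\|_{H^s}\,d\tau,
\]
whence Gronwall yields $\|\psi-\tilde\psi\|_{C^0([0,T),H^s)}\le e^{C(\|u_0\|_{L^1}+M^{2p}t)}\|\psi_0-\psi_1\|_{H^s}$, which is the claimed inequality with $C=e^{C'(\ldots)}$. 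The only genuinely delicate point is bookkeeping the dependence of the constants and the lifespan on $\|u_0\|_{L^1_{\rm loc}}$ rather than $L^\infty$ (so that merely $L^1_{\rm loc}$ controls suffice, consistent with the piecewise-constant controls used later), and making sure the "maximal common time $T$" in the continuity statement is handled correctly when one of the two solutions blows up; both are routine but require attention. Everything else is the textbook Banach algebra $H^s$ local well-posedness scheme (cf.\ Cazenave, Kato), adapted to the explicit unitary group $V(t)$.
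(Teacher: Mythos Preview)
Your proposal is correct and follows exactly the approach the paper has in mind: the paper omits the proof as classical, noting only that the key ingredients are the unitarity of $e^{ir\Delta+i\la\int_0^r u(s)ds,P\ra}$ on $H^s(\R^N)$ and the Banach algebra property of $H^s$ for $s>N/2$, and refers to the abstract local well-posedness scheme in Tao's book. Your fixed-point/Gronwall sketch is precisely the standard implementation of this scheme; the only cosmetic slip is that in the $\Phi$-estimate the $u_0$ term should carry $\|u_0\|_{L^1(0,T)}$ without an extra factor $T$ (it comes from $\int_0^t|u_0(\tau)|\,d\tau$), but this does not affect the argument.
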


The proof of Proposition \ref{prop:well} is classical, so we omit it: the key points are the unitarity of the operator $e^{ir\Delta+i\la \int_0^{r}u(s)ds,P\ra}$ in $H^s(\R^N)$, and the fact that $H^s(\R^N)$ is an algebra when $s>N/2$. Hence, one can derive Proposition \ref{prop:well} by applying abstract arguments such as \cite[Proposition 1.38]{TT-2006} (as it is done e.g. in \cite[Proposition 3.8]{TT-2006}).


\section{Small-time limits in the linear case}\label{sec:small-time}

This section aims to prove some small-time limits such as \eqref{eq:limit-intro}, presented in the introduction.

\begin{proposition}\label{lemma:main-tool}
Let $\psi_0\in H^s(\mathbb{R}^{N},\mathbb{C})$ and $\varphi\in H^{2s+1}(\mathbb{R}^{N},\mathbb{R})$. Then, the following limit holds w.r.t. the $H^s$-norm
$$\lim_{\tau\to 0^+}\exp\left( i \frac{\varphi}{\tau}\right)\exp(-i\tau P_j)\exp\left(- i \frac{\varphi}{\tau}\right)\psi_0=\exp(-  P_j\varphi)\psi_0,\quad j=1,\dots,N. $$

\end{proposition}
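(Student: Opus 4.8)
The plan is to make the conjugated operator fully explicit and then pass to the limit by hand. Since $P_j=i\,\partial_{x_j}$, the factor $\exp(-i\tau P_j)=\exp(\tau\,\partial_{x_j})$ is the translation $f\mapsto f(\cdot+\tau e_j)$, while $\exp(\pm i\varphi/\tau)$ is multiplication by the unimodular function $e^{\pm i\varphi(x)/\tau}$. Composing the three factors gives, for a.e.\ $x\in\R^N$,
\[
\Big(\exp(i\varphi/\tau)\exp(-i\tau P_j)\exp(-i\varphi/\tau)\psi_0\Big)(x)=\exp\!\Big(i\,\frac{\varphi(x)-\varphi(x+\tau e_j)}{\tau}\Big)\,\psi_0(x+\tau e_j).
\]
On the other hand $P_j\varphi=i\,\partial_{x_j}\varphi$ is a real-valued function, so $\exp(-P_j\varphi)$ is multiplication by the unimodular $e^{-i\,\partial_{x_j}\varphi}$, and $e^{-P_j\varphi}\psi_0\in H^s(\R^N)$ by the composition/algebra estimates recalled below, so the statement is meaningful. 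It therefore suffices to prove that, with $g_\tau:=\big(\varphi-\varphi(\cdot+\tau e_j)\big)/\tau$,
\[
e^{ig_\tau}\,\psi_0(\cdot+\tau e_j)\ \longrightarrow\ e^{-i\,\partial_{x_j}\varphi}\,\psi_0\qquad\text{in }H^s(\R^N)\ \text{ as }\ \tau\to0^+.
\]

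First I would record the identity $g_\tau=-\int_0^1 (\partial_{x_j}\varphi)(\cdot+\sigma\tau e_j)\,d\sigma$, which shows at once that $\|g_\tau\|_{H^s(\R^N)}$ is bounded uniformly for $\tau\in(0,1]$ and that $g_\tau\to -\,\partial_{x_j}\varphi$ in $H^s(\R^N)$ as $\tau\to0^+$, using strong continuity of translations in $H^s(\R^N)$ together with $\partial_{x_j}\varphi\in H^{2s}(\R^N)\subset H^s(\R^N)$. Then I would split
\[
e^{ig_\tau}\psi_0(\cdot+\tau e_j)-e^{-i\,\partial_{x_j}\varphi}\psi_0=\big(e^{ig_\tau}-e^{-i\,\partial_{x_j}\varphi}\big)\,\psi_0(\cdot+\tau e_j)+e^{-i\,\partial_{x_j}\varphi}\big(\psi_0(\cdot+\tau e_j)-\psi_0\big).
\]
The second summand tends to $0$ in $H^s(\R^N)$ by continuity of translations, because multiplication by $e^{-i\,\partial_{x_j}\varphi}=1+(e^{-i\,\partial_{x_j}\varphi}-1)$ is a bounded operator on $H^s(\R^N)$: indeed $e^{-i\,\partial_{x_j}\varphi}-1\in H^{2s}(\R^N)$ by the standard composition estimate ($F$ smooth, $F(0)=0$, applied to $\partial_{x_j}\varphi\in H^{2s}$ with $2s>N/2$), and $H^{2s}(\R^N)\cdot H^s(\R^N)\subset H^s(\R^N)$. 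For the first summand I would invoke a Moser/tame Lipschitz bound for $w\mapsto e^{iw}-1$ on bounded subsets of $H^s(\R^N)$, valid since $s>N/2$, of the form $\|e^{iw_1}-e^{iw_2}\|_{H^s}\le C\big(\|w_1\|_{H^s},\|w_2\|_{H^s}\big)\,\|w_1-w_2\|_{H^s}$; applying it with $w_1=g_\tau$ and $w_2=-\,\partial_{x_j}\varphi$, and using $\|\psi_0(\cdot+\tau e_j)\|_{H^s}=\|\psi_0\|_{H^s}$ together with the algebra property $H^s\cdot H^s\subset H^s$, shows the first summand vanishes as $\tau\to0^+$ as well.

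The one genuinely delicate point — and the step I expect to be the main obstacle — is that $\varphi/\tau$ blows up as $\tau\to0^+$, so the three factors cannot be controlled individually; the whole argument rests on performing the algebraic cancellation $e^{i\varphi(x)/\tau}e^{-i\varphi(x+\tau e_j)/\tau}=e^{ig_\tau(x)}$, with $g_\tau$ uniformly bounded in $H^s$, \emph{before} any Sobolev norm is taken. After this reduction, the proof uses only two classical ingredients: strong continuity of translations in $H^m(\R^N)$, and Moser-type composition/multiplication estimates for $w\mapsto e^{iw}$. The hypothesis $\varphi\in H^{2s+1}(\R^N)$ enters precisely through $\partial_{x_j}\varphi\in H^{2s}(\R^N)$ with $2s>N/2$: this is comfortably enough regularity for the estimates above, it guarantees $e^{-P_j\varphi}\psi_0\in H^s(\R^N)$, and it matches the class of phases appearing in Theorem~\ref{thm:main-result}. (With one extra derivative on $\varphi$, the same computation would even yield the quantitative rate $O(\tau)$, but this is not needed here.)
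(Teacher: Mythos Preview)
Your argument is correct and takes a genuinely different route from the paper's. You exploit the explicit identification of $e^{-i\tau P_j}$ with the translation semigroup, compute the conjugated operator pointwise as multiplication by $e^{ig_\tau}$ followed by a shift, and then pass to the limit using continuity of translations together with Moser-type composition estimates for $w\mapsto e^{iw}$. The paper instead treats the problem as an abstract evolution: it introduces $v(t)=e^{i\varphi/\tau}e^{-i\tau tP_j}e^{-i\varphi/\tau}\psi_0-e^{-t(P_j\varphi)}\psi_0^\tau$ (with $\psi_0^\tau$ a mollification of $\psi_0$ satisfying $\|\psi_0^\tau\|_{H^{s+2}}\le C\tau^{-1/4}$), derives the linear ODE $\partial_t v=-i\tau P_j(v+w)-(P_j\varphi)v$, and closes an $H^s$ energy estimate via Gronwall, using that $\mathrm{Im}\,\la P_j v,v\ra_{H^s}=0$. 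Your approach is shorter and makes the mechanism completely explicit --- the whole point is the algebraic cancellation $e^{i\varphi/\tau}e^{-i\varphi(\cdot+\tau e_j)/\tau}=e^{ig_\tau}$ with $g_\tau$ uniformly bounded, exactly as you emphasize. The paper's energy method, by contrast, does not use the explicit translation formula for $e^{-i\tau P_j}$ and would transfer more readily to conjugations by propagators that are not given in closed form. One small refinement: you appeal to the Moser Lipschitz bound and the algebra property on $H^s$, which require $s>N/2$; since $g_\tau$ and $\partial_{x_j}\varphi$ actually lie (and converge) in $H^{2s}$, it is cleaner to run the composition estimate there --- needing only $2s>N/2$ --- and then use the product rule $H^{2s}\cdot H^s\subset H^s$, which matches the stated hypothesis $\varphi\in H^{2s+1}$ more sharply.
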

\begin{proof}
For any $\tau>0$, we   denote the solution  $\phi(t)=  	e^{i \tau^{-1}\varphi} e^{-i t P}  e^{- i \tau^{-1}\varphi}\psi_0$. We  introduce the following functions
 \begin{align}\label{3.2}
 	w(t)  =   e^{- t (P\varphi) }\psi_0^\tau,\  \ \ \ \ \ \ \   v(t)  = \phi(\tau t)-w(t)
 \end{align}
 where 
 $\psi_0^\tau \in H^{s+2}(\R^N)$ is a regularized initial state satisfying the properties 
\begin{align}\label{mollif}\|	\psi_0-\psi_0^\tau\|_{H^s}\to 0  \quad\,\,\, \text{ as $\tau\to 0^+$},\end{align}
and
 \begin{align}\label{3.3}
 & \|\psi_0^\tau\|_{H^s}  \leq C  \quad\quad\quad\|\psi_0^\tau\|_{H^{s+2}} \leq C   \tau^{-1/4}  \,\,\,\,\quad \text{for } \tau\leq 1,
 \end{align}
for some constant $C>0$ independent of $\tau>0$. We want show that $\phi(\tau)\xrightarrow{\tau\rightarrow 0^+}  e^{ -(P\varphi)}\psi_0 $ w.r.t the $H^s$ norm. It is equivalent to proving the limit
\begin{align}\label{lim}\|v(t=1)\|_{H^s}\xrightarrow{\tau\rightarrow 0^+}  0  .\end{align}
We observe that the function $v$ is the solution of the following equation
\begin{equation}\label{3.7}
\partial_t v
 =- i \tau P (v(t)+w(t)) - (P\varphi) v(t) 
 \end{equation}
with initial condition 
     \begin{equation}\label{3.8}
      v(0)=\psi_0-\psi_0^\tau.
      \end{equation}
      We consider $v(t)$ solution of \eqref{3.7} in $t\in [0,2]$. Denote by $Re(z)$ (resp., $Im(z)$) the real (resp., imaginary) part of a complex number $z$. Since $Im \la P v,v \ra_{H^s}=0$, and thanks to \eqref{3.3}, there exist $C_1,C_2>0$ independent of $\tau>0$ such that
\begin{align*}
\partial_t\| v\|_{H^s}^2&= 2 Re (i\ \tau \la P v,v \ra_{H^s})+ 2 Re (i\ \tau \la P w,v \ra_{H^s})+2 Re \la (P\varphi) v, v  \ra_{H^s}\\
&= -2 \tau Im\la P w,v \ra_{H^s}+ 2 Re \la (P\varphi) v, v  \ra_{H^s}\\
&\leq 2\tau \| w\|_{{H^{s+1}}}\| v\|_{H^s}  + \|\varphi\|_{H^{s+1}}\| v\|_{H^s}^2  
 \\
&\leq C_1(\tau^\frac{3}{4} + \| v\|_{H^s}  )\| v\|_{H^s}\\ 
&\leq C_2( \tau^\frac{1}{2}+(\tau +1)\| v\|_{H^s}^2 ).
\end{align*}
Notice that in the very last inequality, we used the Young inequality.
Now, the Gronwall inequality implies that
\begin{equation}\label{3.10}
\|v(t)\|_{H^s}^2 \le e^{ C_2(\tau +1)t} \big(C_2\tau^\frac{1}{2} t +\|	\psi_0-\psi_0^\tau\|_{H^s}\big)
\end{equation}
Finally, the limit \eqref{lim} is proved thanks to the inequality \eqref{3.10} since$$
\|v(1)\|_{H^s}^2 \leq e^{ C_2(\tau +1)} \big(C_2\tau^\frac{1}{2} +\|	\psi_0-\psi_0^\tau\|_{H^s}\big)
\to 0 \quad \text{as $\tau\to 0^+$}.
$$

\end{proof}


We shall also need some additional small-time limits.

\begin{lemma}\label{coro:main-tool-bis}
Let $\psi_0\in H^s(\mathbb{R}^{N},\mathbb{C})$. Let $(\delta_n)_{n\in\N}$ be a sequence of positive numbers such that $\delta_n\to 0$ as $n\to\infty$, and $(u_n)_{n\in\N}\in \ell^\infty(\N,\R)$. The following limit
holds \begin{align*}\lim_{n \to \infty}\left\|\exp\left(-i { \delta_n} \left(-\Delta+\frac{u_n}{{  \delta_n}} Q_j\right)\right)\psi_0-\exp(-i u_n Q_j)\psi_0 \right\|_{H^s(\mathbb{R}^N)}=0,\end{align*}
where $Q_0=h_{\vec{0}}$ and $Q_j=P_j$ for $j\geq 1$.
\end{lemma}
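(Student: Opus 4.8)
The statement asserts that over a shrinking time interval $\delta_n\to 0$, the propagator generated by $-\Delta + \frac{u_n}{\delta_n}Q_j$ is well-approximated by the propagator $e^{-iu_n Q_j}$ generated by the rescaled control term alone --- i.e.\ the Laplacian drift becomes negligible when the control amplitude is sent to infinity at rate $1/\delta_n$. My plan is to mimic the energy-estimate argument of Proposition \ref{lemma:main-tool}. First I would set $\phi_n(t) := \exp\!\left(-it\left(-\Delta+\frac{u_n}{\delta_n}Q_j\right)\right)\psi_0$ and $w_n(t) := \exp\!\left(-i\frac{t u_n}{\delta_n}Q_j\right)\psi_0$, and consider the difference $v_n(t) := \phi_n(\delta_n t) - w_n(\delta_n t)$ on $t\in[0,1]$, so that the target limit becomes $\|v_n(1)\|_{H^s}\to 0$. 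A direct computation shows that $v_n$ solves
\begin{equation*}
\partial_t v_n = -i\delta_n\left(-\Delta\right)\big(v_n(t)+w_n(\delta_n t)\big) - i u_n Q_j v_n(t),\qquad v_n(0)=0,
\end{equation*}
where I have factored out the $\delta_n$ from the Laplacian term (which is the only term not already of order one) and kept the $u_n Q_j$ term as an order-one skew-adjoint perturbation.

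Next I would run a Gr\"onwall estimate on $\|v_n\|_{H^s}^2$. The $-iu_n Q_j v_n$ term contributes nothing to $\partial_t\|v_n\|_{H^s}^2$ when $j\geq 1$ since $P_j$ is skew-adjoint on $H^s(\R^N)$ (so $\mathrm{Re}\,\langle i u_n P_j v_n, v_n\rangle_{H^s}=0$); when $j=0$ the operator is multiplication by the real function $h_{\vec 0}$, and $\mathrm{Re}\,\langle i u_n h_{\vec 0} v_n, v_n\rangle_{H^s}$ is bounded by $C\|h_{\vec 0}\|_{H^{s}}\|u_n\|_{\ell^\infty}\|v_n\|_{H^s}^2 \leq C'\|v_n\|_{H^s}^2$ using that $H^s$ is an algebra. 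The Laplacian term contributes $2\delta_n\,\mathrm{Re}\,\langle -i\Delta(v_n+w_n), v_n\rangle_{H^s}$; the $v_n$-part again vanishes by skew-adjointness of $i\Delta$, and the $w_n$-part is bounded by $2\delta_n\|\Delta w_n(\delta_n t)\|_{H^s}\|v_n\|_{H^s} \le 2\delta_n\|w_n(\delta_n t)\|_{H^{s+2}}\|v_n\|_{H^s}$. Here is where, exactly as in Proposition \ref{lemma:main-tool}, I would need to control $\|w_n(\delta_n t)\|_{H^{s+2}}$; this is the crux. Since $\psi_0$ is only in $H^s$, I would replace it by a regularized initial datum $\psi_0^n\in H^{s+2}$ with $\|\psi_0-\psi_0^n\|_{H^s}\to 0$ and $\|\psi_0^n\|_{H^{s+2}}\leq C\delta_n^{-1/2}$ (say), absorbing the discrepancy into the initial condition $v_n(0)=\psi_0-\psi_0^n$. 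One must then check that $\exp(-i\frac{\delta_n t\, u_n}{\delta_n}Q_j) = \exp(-i t u_n Q_j)$ maps $H^{s+2}$ to $H^{s+2}$ with a norm bounded uniformly in $t\in[0,1]$ and $n$: for $j\ge 1$ it is an isometry, and for $j=0$ it is a multiplication-operator flow whose $H^{s+2}$-norm growth is controlled uniformly since $\|u_n\|_{\ell^\infty}<\infty$. Thus $\delta_n\|w_n(\delta_n t)\|_{H^{s+2}} \leq C\delta_n^{1/2}\to 0$.

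Collecting terms, I obtain a differential inequality of the shape $\partial_t\|v_n\|_{H^s}^2 \leq C_1\big(\delta_n^{1/2} + \|v_n\|_{H^s}^2\big)$ with $C_1$ independent of $n$, and Gr\"onwall gives $\|v_n(1)\|_{H^s}^2 \leq e^{C_1}\big(C_1\delta_n^{1/2} + \|\psi_0-\psi_0^n\|_{H^s}\big)\to 0$, which is the claim. The main obstacle, as flagged, is the $j=0$ case: unlike $P_j$, multiplication by $h_{\vec 0}$ does not commute with derivatives, so one must verify that both the commutator term in the energy estimate and the $H^{s+2}$-propagation of $w_n$ stay uniformly bounded --- but since $h_{\vec 0}$ is Schwartz and the controls $u_n$ are uniformly bounded, these are routine algebra-property estimates with constants depending only on $N,s$ and $\sup_n|u_n|$. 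A minor point to handle carefully is that the rescaled control amplitude in $w_n$ is $\frac{u_n}{\delta_n}\cdot\delta_n = u_n$ after the time change, so no blow-up actually survives in $w_n$ itself --- the only place $\delta_n^{-1}$-type growth could enter is through the regularization, which we have arranged to be beaten by the explicit $\delta_n$ multiplying the Laplacian term.
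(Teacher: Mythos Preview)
Your proof is correct and treats both cases $j=0$ and $j\geq 1$ uniformly via the energy-estimate/regularization scheme of Proposition~\ref{lemma:main-tool}. The paper proceeds differently for $j\geq 1$: since $-\Delta$ and $P_j$ commute (both are Fourier multipliers), one has exactly $e^{-i\delta_n(-\Delta+\frac{u_n}{\delta_n}P_j)}=e^{i\delta_n\Delta}e^{-iu_nP_j}$, and Plancherel together with dominated convergence on $(1+|\xi|^2)^s|e^{-i\delta_n|\xi|^2}-1|^2|\widehat{\psi_0}(\xi)|^2$ gives the limit in two lines, with no regularization or Gr\"onwall needed. For $j=0$ the paper's argument is essentially yours --- an energy estimate on $v$ solving $\partial_t v=i\delta_n\Delta(v+w)-iu_nh_{\vec 0}v$ --- except that it first assumes $\psi_0\in H^{s+2}$ (so $v(0)=0$ and $\|w\|_{H^{s+2}}$ is simply bounded) and then invokes density, rather than building a quantitative mollifier $\psi_0^n$ with $\|\psi_0^n\|_{H^{s+2}}\lesssim\delta_n^{-1/2}$. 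Your route has the virtue of being uniform in $j$ and transplanting verbatim from Proposition~\ref{lemma:main-tool}; the paper's route is shorter for $j\geq 1$ by exploiting the commutation. One cosmetic slip: after applying Young's inequality to $2\delta_n\|w_n\|_{H^{s+2}}\|v_n\|_{H^s}\lesssim\delta_n^{1/2}\|v_n\|_{H^s}$, the inhomogeneous term in your Gr\"onwall should come out as $O(\delta_n)$ rather than $O(\delta_n^{1/2})$, which only strengthens the conclusion.
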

\begin{proof}
\textbf{Case $j>0$.} Using the Plancherel Theorem we have
\begin{align*}
&\left\|\exp\left(-i { \delta_n} \left(-\Delta+\frac{u_n}{{  \delta_n}} P_j\right)\right)\psi_0-\exp(-i u_n P_j)\psi_0 \right\|^2_{H^s(\mathbb{R}^N)}\\
=&\int_{\R^N}(1+|\xi|^2)^s \left|(e^{-i\delta_n|\xi|^2}-1)e^{-iu_n\xi_j}\widehat{\psi_0}(\xi)\right|d\xi,
\end{align*}
where $\widehat{\psi_0}$ denotes the Fourier transform of $\psi_0$. The statement then follows by dominated convergence.

\textbf{Case $j=0$.} For $\psi_0\in H^{s+2}(\R^N)$, we introduce
\begin{align*}
\phi(t)&=\exp\left(-i t \left(-\Delta+\frac{u_n}{{  \delta_n}} h_{\vec{0}}\right)\right)\psi_0,\\
w(t)&=\exp(-it u_n h_{\vec{0}})\psi_0,\\
 v(t)&=\phi(\delta_nt)-w(t). 
\end{align*}
We are left to prove that 
\begin{equation}\label{eq:v-limit}
\|v(t=1)\|_{H^s}\to 0,\quad n\to \infty.
\end{equation}
 We have
$$\partial_tv= i\delta_n\Delta(v+w)-iu_nh_{\vec{0}}v,\quad v(0)=0,$$
hence, since $Im\la \Delta v,v\ra_{H^s}=0$, there exists $C$ uniformly bounded w.r.t. $n$ such that
$$\partial_t\|v\|_{H^s}^2\leq C(\delta_n\|v\|_{H^s}+|u_n|\|v\|_{H^s}^2)\leq C\left(\frac{\delta_n^2}{2}+\left(\frac{\delta_n^2}{2}+|u_n|\right)\|v\|_{H^s}^2\right), $$
where in the last inequality we used Young inequality. Thanks to the Gronwall inequality, we get
$$\|v(t)\|_{H^s}^2\leq Ct\frac{\delta_n^2}{2}e^{Ct\left(\frac{\delta_n^2}{2}+|u_n|\right)},  $$
which implies \eqref{eq:v-limit}. For $\psi_0\in H^{s}(\R^N)$, the statement follows by density.
\end{proof}

The following is a direct consequence of Lemma \ref{coro:main-tool-bis}.
\begin{corollary}\label{nuovo1}
Let $\psi_0\in H^s(\mathbb{R}^N)$. Then, for any $u\in\mathbb{R}$, 
\begin{align}\label{nonlin}\lim_{\tau\to 0^+}\sup_{t \in (0,1)}\left \|\left(\exp \left(- i \tau t \left(-\Delta+\frac{u}{\tau} h_{\vec{0}}\right) \right)-\exp(-i t u h_{\vec{0}})\right)\psi_0\right \|_{H^s(\R^N)}^2 =0.\end{align}
\end{corollary}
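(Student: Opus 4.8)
The plan is to derive Corollary \ref{nuovo1} from Lemma \ref{coro:main-tool-bis} by a contradiction-and-subsequence argument, upgrading the pointwise (at $t=1$) limit of the Lemma to a limit uniform in the auxiliary time parameter $t\in(0,1)$. The first observation is that, writing $\tau t=\delta$, we have the identity
\begin{align*}
\exp\!\left(-i\tau t\Big(-\Delta+\tfrac{u}{\tau}h_{\vec 0}\Big)\right)
=\exp\!\left(-i\delta\Big(-\Delta+\tfrac{ut}{\delta}h_{\vec 0}\Big)\right),
\qquad \exp(-itu h_{\vec 0})=\exp\!\left(-i(ut)h_{\vec 0}\right),
\end{align*}
so that, after setting $u_n:=u t_n$ and $\delta_n:=\tau_n t_n$, the quantity inside the supremum has exactly the form treated in Lemma \ref{coro:main-tool-bis} with $j=0$ and $Q_0=h_{\vec 0}$. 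The point is that $|u_n|=|u|\,t_n\le |u|$ is uniformly bounded (so the sequence lies in $\ell^\infty(\N,\R)$), and $\delta_n=\tau_n t_n\le \tau_n\to 0$, which are precisely the hypotheses of the Lemma.

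First I would argue by contradiction: suppose the $\limsup_{\tau\to 0^+}$ of the supremum is some $c>0$. Then there exist sequences $\tau_n\to 0^+$ and $t_n\in(0,1)$ with
\[
\Big\|\Big(\exp\!\big(-i\tau_n t_n(-\Delta+\tfrac{u}{\tau_n}h_{\vec 0})\big)-\exp(-it_n u h_{\vec 0})\Big)\psi_0\Big\|_{H^s(\R^N)}^2\ge \tfrac{c}{2}
\]
for all $n$. Applying Lemma \ref{coro:main-tool-bis} with the choices $\delta_n:=\tau_n t_n$ and $u_n:=u t_n$ (which satisfy its hypotheses as noted above), the left-hand side tends to $0$ as $n\to\infty$, contradicting the lower bound $c/2>0$. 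Hence the $\limsup$ is $0$, which is the claim. One small subtlety: if some $t_n=0$ the corresponding term vanishes identically, so one may harmlessly restrict to $t\in(0,1)$ or just note the bound is $0$ there; and if $\delta_n=\tau_n t_n$ fails to be strictly positive because $t_n=0$, the same triviality applies, so the Lemma is invoked only along the subsequence where $t_n>0$.

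The only genuine point requiring care — and the step I would flag as the main (modest) obstacle — is that Lemma \ref{coro:main-tool-bis} is stated for a fixed sequence $(u_n)$ and $(\delta_n)$, whereas here the "bad" sequences $t_n$ are produced by the contradiction hypothesis and then fed into the Lemma; one must check that the constants in the Lemma's Gronwall estimate depend on $(u_n)$ only through $\|(u_n)\|_{\ell^\infty}$ and not on finer features of the sequence. Inspecting the proof of Lemma \ref{coro:main-tool-bis} in the case $j=0$, the constant $C$ is indeed uniform in $n$ and the final bound $\|v(1)\|_{H^s}^2\le C\tfrac{\delta_n^2}{2}e^{C(\delta_n^2/2+|u_n|)}$ goes to zero as soon as $\delta_n\to 0$ and $\sup_n|u_n|<\infty$, which holds here. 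Thus no new estimate is needed, and the argument is essentially a clean reindexing: the auxiliary time $t$ is absorbed into the parameters $\delta$ and $u$ of the already-proven limit, and uniformity in $t\in(0,1)$ follows automatically from the uniformity of the Gronwall constant in those parameters.
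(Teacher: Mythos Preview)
Your argument is correct and is precisely the ``direct consequence'' the paper has in mind: the reindexing $\delta_n=\tau_n t_n$, $u_n=u t_n$ places you exactly in the hypotheses of Lemma~\ref{coro:main-tool-bis} (case $j=0$), and the contradiction/subsequence device upgrades the sequential limit there to the uniform-in-$t$ statement required here. The paper gives no further details, so your write-up is in fact more explicit than the original; the remark about $t_n=0$ is unnecessary since the supremum is over the open interval $(0,1)$.
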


\section{Small-time limits in the nonlinear case}\label{sec:NLS}
We are now ready to prove that the nonlinearity does not affect the small-time limits previously considered.
\begin{proposition}\label{lemma:limit-nonlinear1}
Let $s,p\in\N$, $s>N/2$, and $\kappa\in\R$. Consider any initial state $\psi_0\in H^s(\mathbb{R}^N)$. Then, for any $u\in\mathbb{R}$, 
the following limit holds in $H^s$
\begin{align*}\lim_{\delta\to 0^+}\mathcal{R}\Big(\delta,\psi_0,e_j \frac{u}{\delta}\Big)=\exp(-i uQ_j)\psi_0,\end{align*}
where $\{e_j\}_{j=0,.., N}$ is the standard basis of $\mathbb{R}^{N+1}$, $Q_0=h_{\vec{0}}$ and $Q_j=P_j$ for $j\geq 1$.
\end{proposition}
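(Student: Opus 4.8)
The plan is to show that the nonlinear dynamics $\RR(\delta,\psi_0,e_j u/\delta)$ and the linear dynamics $e^{-i\delta(-\Delta+(u/\delta)Q_j)}\psi_0$ differ by $o(1)$ in $H^s$ as $\delta\to 0^+$, and then conclude by invoking Lemma \ref{coro:main-tool-bis} (with $\delta_n=\delta$, $u_n=u$ constant), which gives that the latter converges to $e^{-iuQ_j}\psi_0$. So the only real work is to control the contribution of the nonlinear (and, in the case $j=0$, the $h_{\vec 0}$) term over the short interval $[0,\delta]$.

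First I would write, using the Duhamel formula from Proposition \ref{prop:well} with control $(u_0,u)=e_j u/\delta$ on $[0,\delta]$, the difference
$$\RR(\delta,\psi_0,e_j u/\delta)-e^{i\delta\Delta-i\la\int_0^\delta (e_j u/\delta)_{1:N},P\ra}\psi_0 = -i\int_0^\delta e^{i(\delta-\tau)\Delta-i\la\cdots,P\ra}\Big((u_0(\tau)\vec h_0+\kappa|\psi(\tau)|^{2p})\psi(\tau)\Big)\,d\tau,$$
where for $j\geq 1$ the free evolution already carries the factor $e^{-i\la u e_j,P\ra}$ and the integrand only contains $\kappa|\psi(\tau)|^{2p}\psi(\tau)$, while for $j=0$ the integrand is $((u/\delta)h_{\vec 0}+\kappa|\psi(\tau)|^{2p})\psi(\tau)$. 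Using unitarity of the free propagator on $H^s$ and the fact that $H^s(\R^N)$ is an algebra for $s>N/2$, the $H^s$-norm of the remainder is bounded by $\int_0^\delta (|u_0(\tau)|\|h_{\vec 0}\|_{H^s}+|\kappa|\|\psi(\tau)\|_{H^s}^{2p})\|\psi(\tau)\|_{H^s}\,d\tau$. The key a priori bound I need is that $\|\psi(\tau)\|_{H^s}$ stays bounded by a constant independent of $\delta$ for $\tau\in[0,\delta]$: for $j\geq 1$ this is immediate since $\|\psi(\tau)\|_{H^s}\leq\|\psi_0\|_{H^s}+|\kappa|\int_0^\tau\|\psi(\sigma)\|_{H^s}^{2p+1}d\sigma$ and a continuity/bootstrap argument gives a uniform bound on a time interval whose length depends only on $\|\psi_0\|_{H^s}$, $\kappa$, $p$ (and $\delta$ is eventually smaller than this). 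For $j=0$ one must be slightly more careful because of the large factor $u/\delta$; however, the $h_{\vec 0}$ term is purely multiplicative by a bounded real function, so it only contributes a phase-type rotation that preserves $\|\psi(\tau)\|_{L^2}$ and, more generally, can be absorbed: differentiating $\|\psi\|_{H^s}^2$, the contribution of $(u/\delta)h_{\vec 0}\psi$ is $2\,\mathrm{Re}\,\la (u/\delta)h_{\vec 0}\psi,\psi\ra_{H^s}$, which is $O(\delta^{-1}\|\psi\|_{H^s}^2)$ with constant $\|h_{\vec 0}\|_{H^{s+1}}$-ish, so Gronwall over $[0,\delta]$ multiplies the $H^s$-norm by at most $e^{C\cdot\delta^{-1}\cdot\delta}=e^{C}$, still uniform in $\delta$. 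With the uniform bound $\|\psi(\tau)\|_{H^s}\leq M$ in hand, the remainder integral is $\leq \int_0^\delta(|u|/\delta\cdot\|h_{\vec 0}\|_{H^s}+|\kappa|M^{2p})M\,d\tau = |u|\|h_{\vec 0}\|_{H^s}M\,???$ — wait, for $j=0$ the $h_{\vec 0}$-term gives a contribution of order $O(1)$ rather than $o(1)$, so I cannot simply discard it; instead, for $j=0$ I should compare $\RR(\delta,\psi_0,e_0 u/\delta)$ directly with the \emph{linear} flow $e^{-i\delta(-\Delta+(u/\delta)h_{\vec 0})}\psi_0$ (keeping the $h_{\vec 0}$ term on the linear side), so that the Duhamel remainder contains only the genuinely nonlinear term $\kappa|\psi|^{2p}\psi$, whose $H^s$-norm integrates to $\leq|\kappa|M^{2p+1}\delta = O(\delta)\to 0$. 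Then Corollary \ref{nuovo1} (or directly Lemma \ref{coro:main-tool-bis}) handles the linear part.

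The main obstacle, then, is the $j=0$ a priori estimate: one needs $\|\psi(\tau)\|_{H^s}$ bounded uniformly in $\delta$ on $[0,\delta]$ despite the control amplitude blowing up like $\delta^{-1}$, and one needs the local well-posedness time (from Proposition \ref{prop:well}) not to shrink to zero faster than $\delta$. Both are handled by the observation that $h_{\vec 0}$ is a bounded multiplication operator: the large term generates a near-unitary flow on $H^s$ (its deviation from unitarity on $H^s$ is controlled by commutators $[\Delta^{s/2},h_{\vec 0}]$, i.e. by $\|h_{\vec 0}\|_{C^{s+1}}$, independently of the amplitude), so the effective time over which the $H^s$-norm can grow is $\sim\delta\cdot(u/\delta)\cdot 0 + \delta\cdot O(1)$ — that is, the growth rate from the commutator defect is $O(\delta^{-1})$ but acts for time $\delta$, giving a uniform multiplicative constant $e^{C}$. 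Once this uniform bound is established, everything else is a routine Gronwall/Duhamel estimate. I would therefore organize the proof as: (i) reduce to comparison with the linear flow via Duhamel; (ii) establish the uniform $H^s$-bound on $[0,\delta]$, treating $j=0$ and $j\geq1$ separately; (iii) bound the nonlinear Duhamel remainder by $O(\delta)$; (iv) conclude with Lemma \ref{coro:main-tool-bis}.
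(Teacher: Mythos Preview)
Your strategy is sound and takes a genuinely different route from the paper's. The paper compares the nonlinear solution $\psi(\delta t)$ directly with the \emph{target} $\varphi(t)=e^{-ituQ_j}\psi_0$: it sets $\phi(t)=\psi(\delta t)-\varphi(t)$, derives an integral (Duhamel) inequality for $\|\phi(t)\|_{H^s}^2$, and runs a bootstrap on the quantity $\varepsilon^\delta:=\sup\{t<\delta^{-1}T^\delta:\|\phi(t)\|_{H^s}<1\}$, which simultaneously forces $T^\delta>\delta$ and $\|\phi(1)\|_{H^s}\to 0$. For $j=0$, where $-\Delta$ and $h_{\vec 0}$ do not commute, the paper inserts the auxiliary linear flow $\tilde\psi(t)=e^{-it(-\Delta+(u/\delta)h_{\vec 0})}\psi_0$ into the Duhamel expansion of $\phi$ and obtains a Gronwall inequality whose inhomogeneous term is exactly the quantity controlled by Corollary~\ref{nuovo1}. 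You instead split the problem cleanly into (i) $\|\RR(\delta,\psi_0,\cdot)-e^{-i\delta(-\Delta+(u/\delta)Q_j)}\psi_0\|_{H^s}=O(\delta)$, obtained from a uniform-in-$\delta$ a priori bound on $\|\psi\|_{H^s}$ over $[0,\delta]$, and then (ii) Lemma~\ref{coro:main-tool-bis} for the linear piece. Your decomposition is more modular; the paper's buys you that all estimates stay in integral form.

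That last point is the one soft spot in your plan: for $j=0$ you obtain the uniform bound by differentiating $\|\psi(t)\|_{H^s}^2$, but the mild solution from Proposition~\ref{prop:well} is only $C^0([0,T),H^s)$ and $\partial_t\psi$ contains $\Delta\psi\in H^{s-2}$, so the pairing $\langle\partial_t\psi,\psi\rangle_{H^s}$ is formal. You need either to regularize $\psi_0$ in $H^{s+2}$ and pass to the limit via the continuous dependence in Proposition~\ref{prop:well}, or to recast the a priori bound as a Duhamel/integral inequality---which is precisely how the paper avoids this issue. You should also make explicit that the uniform bound, together with the blow-up alternative, is what guarantees $T^\delta\geq\delta$. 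With these standard details supplied, your argument goes through.
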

\begin{proof}
{\bf Case $j>0$}. Let $\delta>0$. Proposition \ref{prop:well} gives the solution $\psi(t)=\mathcal{R}\big(t,\psi_0,e_j \frac{u}{\delta}\big)$, well-defined on a maximal time interval $[0,T^\delta)$ with $T^\delta>0$. We denote $$\varphi(t)=\exp(-i t uQ_j )\psi_0 \ \ \ \ \ \text{and}\ \ \  \ \ \phi(t)=\psi(\delta t)-\varphi(t),$$ which is well-defined for $t<\delta^{-1} T^\delta $. We want to prove the existence of $\delta_0>0$ such that, for every $0<\delta <\delta_0 $, we have $1\in[0,\delta^{-1} T^\delta)$ and finally ensure the limit 
 $$\|\phi(t=1)\|_{H^s(\mathbb{R}^N)}\xrightarrow{\delta\rightarrow 0^+} 0.$$ 
 Using the Duhamel formula, we can write 
\begin{equation}\begin{split}\label{duhamel}\phi(t) =&\left(\exp\left(-i \delta t \left(-\Delta+\frac{u}{\delta} Q_j\right)\right)-\exp(-i t uQ_j)\right)\psi_0 \\
&
-i\kappa \int_0^{\delta t} e^{i(\delta t-\tau)\left(\Delta-\frac{u}{\delta} Q_j\right)}|\psi(\tau)|^{2p}\psi(\tau) d\tau .\end{split}
\end{equation}
We now study the $H^s$-norm of the integral term in \eqref{duhamel}. As a general remark, thanks to the Cauchy-Schwartz inequality, we have that for every $f\in L^2((0,t), L^2(\R^N,\C))$
\begin{equation}\label{eq:cauchy}
    \left\|\int_0^{ t} \!\!\!\!f(r) dr \right\|^2_{L^2(\mathbb{R}^N)} \leq {  t} \int_0^{  t} \|  f(r)  \|^2_{L^2(\mathbb{R}^N)}dr.
\end{equation}
Moreover, notice that the operator $-\Delta+\frac{u}{\delta} Q_j$ commutes with $-\Delta$, implying the unitarity of the operator $e^{it \left( \Delta-\frac{u}{\delta} Q_j\right)}$ on $H^s$.
Combining these properties with the fact that $H^s(\R^N)$ is an algebra for $s>N/2$ yields the existence of $C=C(N,s)>0$ such that
\begin{equation}\begin{split}\label{limit1}
    \left\|(-\Delta)^\frac{s}{2}\!\!\! \int_0^{\delta t} \!\!\!\!e^{i({\delta t}-\tau)\left( \Delta-\frac{u}{\delta} Q_j\right)}|\psi(\tau)|^{2p}\psi(\tau) d\tau \right\|^2_{L^2(\mathbb{R}^N)} 
   & \!\!\!\!\!\!\!\!\!\!\!\!\!\leq {\delta t} \int_0^{\delta t} \|(-\Delta)^\frac{s}{2} |\psi(\tau)|^{2p}\psi(\tau)\,\|^2_{L^2(\mathbb{R}^N)}d\tau\\
    &\!\!\!\!\!\!\!\!\!\!\!\!\!\leq C{\delta t} \int_0^{\delta t} \|\psi(\tau)\|^{4p+2}_{H^s(\mathbb{R}^N)}d\tau.\\
        \end{split}
\end{equation}
Hence, when $t<\delta^{-1} T^\delta $, we get (for some $C=C(N,s)>0$)
\begin{align}\label{eq:align}
\|\phi(t)\|_{H^s(\mathbb{R}^N)}^2&\leq 2\left\|\exp\left(-i \delta t \left(-\Delta+\frac{u}{\delta} Q_j\right)\right)\psi_0-\exp(-i t uQ_j)\psi_0 \right\|_{H^s(\mathbb{R}^N)}^2\nonumber\\
&+2C\kappa^2{\delta t}\int_0^{\delta t} \|\psi(\tau)\|_{H^s(\mathbb{R}^N)}^{4p+2}d\tau\nonumber\\
&= 2\left\|\exp\left(-i \delta t \left(-\Delta+\frac{u}{\delta} Q_j\right)\right)\psi_0-\exp(-i t uQ_j)\psi_0 \right\|_{H^s(\mathbb{R}^N)}^2\nonumber\\
&+2C\kappa^2{\delta^2 t}\int_0^{  t} \|\psi(\delta r)\|_{H^s(\mathbb{R}^N)}^{4p+2}dr\nonumber\\
&\leq 2\left\|\exp\left(-i \delta t \left(-\Delta+\frac{u}{\delta} Q_j\right)\right)\psi_0-\exp(-i t uQ_j)\psi_0 \right\|_{H^s(\mathbb{R}^N)}^2\nonumber\\
&+  2C\kappa^2{\delta^2t}\int_0^{  t} \|\varphi(r)\|_{H^s(\mathbb{R}^N)}^{4p+2}dr  +C{\delta^2t}\int_0^{  t} \|\phi(r)\|_{H^s(\mathbb{R}^N)}^{4p+2}dr.
 \end{align}

Denote $\varepsilon^\delta:=\sup\{t<\delta^{-1} T^\delta:\ \|\phi(t)\|_{H^s(\R^N)}<1 \}$. We prove the existence of $\delta_0>0$ such that, for every $0<\delta <\delta_0 $, we have $\delta^{-1} T^\delta>1$. We ensure this property by showing that, for $\delta_0>0$ small enough, we have $$\varepsilon^\delta>1,\ \ \ \ \ \ \ \text{for every}\ \ \ \ 0<\delta <\delta_0.$$ 
We proceed by contradiction: assume that, for every $\delta_0>0$, there exists $0<\delta <\delta_0 $ such that $\varepsilon^\delta\leq 1$. Thus, there exists at least a sequence of positive numbers $\delta_n\xrightarrow{n\rightarrow +\infty} 0 $ such that $\varepsilon^{\delta_n}\leq 1$ for every $n\in\N$. Notice that there exists $C'=C'(u,Q) >0$, such that $$\|\varphi(t)\|_{H^s(\mathbb{R}^N)}\leq C' \|\psi_0\|_{H^s(\mathbb{R}^N)},\quad \forall t\in[0,1].$$
Let $\widetilde{C}=\max\{C,C'\}$. Since $\|\phi(t)\|_{H^s(\mathbb{R}^N)}<1$ in $[0,\varepsilon^{\delta_n})$, we use \eqref{eq:align} and find that
\begin{equation}\begin{split}\label{limit5}
1& =  \|\phi(\varepsilon^{\delta_n}) \|_{H^s(\mathbb{R}^N)}^2 \\
&<  2\left\|\exp\left(-i \delta_n \varepsilon^{\delta_n}\left(-\Delta+\frac{u}{{\delta_n}} Q_j\right)\right)\psi_0 - \exp(-i {\varepsilon^{\delta_n}} uQ_j)\psi_0 \right\|_{H^s(\mathbb{R}^N)}^2\\
&+2\widetilde{C}\kappa^2  \delta_n^2  \big(\varepsilon^{\delta_n}\big)^2    \left( \|\psi_0\|_{H^s(\mathbb{R}^N)}^{4p+2}  +1\right).
 \end{split}
\end{equation}
If we set $\tilde \delta_n:={\delta_n} \varepsilon^{\delta_n}$ and $u_n=\varepsilon^{\delta_n} u$, then $\tilde\delta_n\xrightarrow{n\rightarrow +\infty} 0$ and $(u_n)_{n\in\N}\in\ell^\infty(\N,\R)$. Then, thanks to \eqref{limit5} and Lemma \ref{coro:main-tool-bis}, there exists $n$ sufficiently large such that
$$2\left\|\exp\left(-i {\tilde \delta_n} \left(-\Delta+\frac{u_n}{{\tilde \delta_n}} Q_j\right)\right)\psi_0-\exp(-i u_n Q_j)\psi_0 \right\|_{H^s(\mathbb{R}^N)}^2 < \frac{1}{2},$$
and $ 2\widetilde{C}\kappa^2{\tilde \delta_n^2 }    (  \|\psi_0\|_{H^s(\mathbb{R}^N)}^{4p+2}  +1) < 1/2$, contradicting \eqref{limit5}. Hence, there exists $\delta_0>0$ small enough such that $\varepsilon^\delta>1$ for every $0<\delta <\delta_0 $. Finally, $1\in [0,\varepsilon^\delta)\subset[0,\delta^{-1}T^\delta)$ and the result is proved thanks to Lemma \ref{coro:main-tool-bis} since we have
\begin{equation*}\begin{split}
\|\phi(1)\|_{H^s(\mathbb{R}^N)}^2&\leq 2\left\|\exp\left(-i \delta \left(-\Delta+\frac{u}{\delta} Q_j\right)\right)\psi_0-\exp(-i uQ_j)\psi_0 \right\|_{H^s(\mathbb{R}^N)}^2\\
&+ 2\widetilde{C}\kappa^2  \delta^2   \left(  \|\psi_0\|_{H^s(\mathbb{R}^N)}^{4p+2}  +1\right)\xrightarrow{\delta\rightarrow 0^+} 0.
\end{split}
\end{equation*}

\smallskip
\noindent
{\bf Case $j=0$}. We define $\psi,\phi,\varphi$ as in the previous case. Now the operator $-\Delta+\frac{u}{\delta} Q_0$ does not commute with $-\Delta$; to proceed as above, we shall find an alternative way to ensure an inequality of the type of \eqref{eq:align}. In this case, we can write $\phi(t)$ via the Duhamel formula as follows:
\begin{equation}\begin{split}\label{duhamel1}\phi(t)
=\left(\exp ( i \delta t \Delta )-\exp(-i t uQ_0)\right)\psi_0  -i  \delta\int_0^{ t} e^{i\delta( t-r) \Delta }\left(\frac{u}{\delta} Q_0+\kappa|\psi(\delta r)|^{2p}\right)\psi(\delta r) d r . 
\end{split}
\end{equation}
Moreover, we introduce an auxiliary function $\tilde\psi(t):=\exp\left(-i  t \left(-\Delta+\frac{u}{\delta} Q_0\right)\right)\psi_0$ and we can write via the Duhamel formula
\begin{equation}\begin{split}\label{duhamel2}\tilde\psi(\delta t) 
= \exp ( i \delta t \Delta ) \psi_0 -i  {u}\int_0^{  t} e^{i\delta( t-r) \Delta } Q_0\tilde\psi(\delta r) dr.
\end{split}
\end{equation}
We use \eqref{duhamel2} in \eqref{duhamel1} and we obtain for $t<\delta^{-1} T^\delta $ that
\begin{equation}\begin{split}\label{duhamel3}\phi(t) =&\left(\tilde \psi(\delta t)-\exp(-i t uQ_0)\psi_0\right) \\
&- i u  \int_0^{  t} e^{i\delta( t-r) \Delta } Q_0\left(\psi(\delta r) -\tilde \psi(\delta r) \right)dr -i\kappa\delta \int_0^{ t} e^{i\delta( t-r) \Delta }|\psi(\delta r)|^{2p}\psi(\delta r) dr \\
=&\left(\tilde \psi(\delta t)-\exp(-i t uQ_0)\psi_0\right)  -i u  \int_0^{  t} e^{i\delta( t-r) \Delta }  Q_0\left(\exp(-i r uQ_0)\psi_0 -\tilde \psi(\delta r) \right)dr \\
&-i u \int_0^{  t} e^{i\delta( t-r) \Delta } Q_0\phi(r)d\tau -i\kappa \delta\int_0^{ t} e^{i\delta( t-r) \Delta }|\psi(\delta r)|^{2p}\psi(\delta r) dr .\\
\end{split}
\end{equation}
We now use \eqref{eq:cauchy} and the unitarity of $e^{it\Delta}$ in $H^s$, and obtain
\begin{equation}\begin{split}\label{limitbis1}\|(-\Delta)^\frac{s}{2}\phi(t)\|_{L^2(\R^N)}^2&\leq 4\left \|(-\Delta)^\frac{s}{2}\left(\tilde \psi(\delta t)-\exp(-i t uQ_0)\psi_0\right)\right \|_{L^2(\R^N)}^2 \\
&+4 u^2  t \int_0^{  t} \left\|(-\Delta)^\frac{s}{2}   Q_0\left(\exp(-i r uQ_0)\psi_0 -\tilde \psi(\delta r)\right) \right\|_{L^2(\R^N)}^2dr \\
&+4 u^2 t  \int_0^{  t}\left\|(-\Delta)^\frac{s}{2}  Q_0\phi(r)  \right\|_{L^2(\R^N)}^2d\tau \\
&+ 4 \kappa^2 \delta^2 t \int_0^{ t} \left\|(-\Delta)^\frac{s}{2} |\psi(\delta r)|^{2p}\psi(\delta r) \right\|_{L^2(\R^N)}^2 dr .\\
\end{split}
\end{equation}
Since $Q_0\in H^s(\R^N)$, there exists $C=C(Q_0,u,\kappa)>0$ such that
\begin{equation}\begin{split}\label{limitbis2}\| \phi(t)\|_{H^s(\R^N)}^2&\leq \left \| \tilde \psi(\delta t)-\exp(-i t uQ_0)\psi_0 \right \|_{H^s(\R^N)}^2 \\
&+C t \int_0^{  t} \left\|   \exp(-i r uQ_0)\psi_0 -\tilde \psi(\delta r)   \right\|_{H^s(\R^N)}^2dr \\
&+C t  \int_0^{  t}\left\|  \phi(r)  \right\|_{H^s(\R^N)}^2d\tau + C \delta^2 t \int_0^{ t} \left\|  \psi(\delta r) \right\|_{H^s(\R^N)}^{4p+2} dr\\
&\leq C\left(1+ t^2\right)\sup_{r \in (0,t)}\left \|\tilde \psi(\delta r)-\exp(-i r uQ_0) \psi_0\right \|_{H^s(\R^N)}^2 \\
&+C t  \int_0^{  t}\left\|  \phi(r)  \right\|_{H^s(\R^N)}^2d\tau + C \delta^2 t \int_0^{ t} \left\|  \phi(r) +  \varphi(r) \right\|_{H^s(\R^N)}^{4p+2} dr.\\
\end{split}
\end{equation}
As in the previous case, we introduce  $\varepsilon^\delta=\sup\{t<\delta^{-1} T^\delta:\ \|\phi(t)\|_{H^s(\R^N)}<1 \}$ and, for $t\leq\min\{1,\varepsilon^\delta\}$, we have (for some new constant $\widetilde{C}=\widetilde{C}(Q_0,u,\kappa)$)
\begin{equation}\begin{split}\label{limitbis3}\| \phi(t)\|_{H^s(\R^N)}^2&\leq 2\widetilde{C}\sup_{r \in(0,1)}\left \| \tilde \psi(\delta r)-\exp(-i r uQ_0) \psi_0\right \|_{H^s(\R^N)}^2 \\
&+\widetilde{C}  (1+\delta^2)   \int_0^{  t}\left\|  \phi(r)  \right\|_{H^s(\R^N)}^2d\tau + \widetilde{C} \delta^2  \int_0^{ 1} \left\|  \varphi(r) \right\|_{H^s(\R^N)}^{4p+2} dr.
\end{split}
\end{equation}
Using the Gronwall inequality we thus obtain
\begin{equation}\begin{split}\label{limitbis4}&\| \phi(t)\|_{H^s(\R^N)}^2 \\ &\leq  e^{\widetilde{C} t (1+\delta^2)} \Bigg(2\widetilde{C} \sup_{r \in (0,1)}\left \| \tilde \psi(\delta r)-\exp(-i r uQ_0) \psi_0\right \|_{H^s(\R^N)}^2   \!\!\!\!\!\!\!\!\!\!\!\!+  \widetilde{C}\delta^2   \int_0^{ 1} \left\|  \varphi(r) \right\|_{H^s(\R^N)}^{4p+2} dr\Bigg) \\
&=  e^{\widetilde{C} t  (1+\delta^2)} \Big(2\widetilde{C} \sup_{r \in (0,1)}\left \|\left(\exp \left(- i \delta r \left(-\Delta+\frac{u}{\delta} Q_0\right) \right)-\exp(-i r uQ_0)\right)\psi_0\right \|_{H^s(\R^N)}^2  \\
&+  \widetilde{C}\delta^2   \int_0^{ 1} \left\|  \varphi(r) \right\|_{H^s(\R^N)}^{4p+2} dr\Big).\\
\end{split}
\end{equation}
Finally, arguing by contradiction exactly as for the case $j>0$, by using \eqref{limitbis4} instead of \eqref{eq:align} and by referring to Corollary \ref{nuovo1} instead of Lemma \ref{coro:main-tool-bis}, we find that there exists $\delta_0>0$ small enough such that $\varepsilon^\delta>1$ for every $0<\delta <\delta_0 $. Then, \eqref{limitbis4} and Corollary \ref{nuovo1} imply that $\|\phi(1)\|_{H^s}\to 0$ as $\delta\to 0$.
\end{proof}

\section{Saturation}\label{sec:saturation} Recall that the 1D Hermite functions are defined for any $n\in\mathbb{N}$ as
\begin{equation}\label{def:hermite}
h_n(x)=(-1)^n(2^nn!\sqrt{\pi})^{-1/2}e^{x^2/2}\frac{d^n}{dx^n}e^{-x^2}.
\end{equation}
In $N$ dimensions, we consider the tensor products of 1-D Hermite functions:
$$h_{n_1,\dots,n_N}(x_1,\dots,x_N)=h_{n_1}(x_1)\dots h_{n_N}(x_N).$$
It is well-known that the Hermite functions
form an (orthonormal) Hilbert basis of $L^2(\mathbb{R}^N,\mathbb{R})$. Moreover, we have the following.
\begin{lemma}\label{lem:density2}For any $s\geq 0$, one has that 
$$
\overline{\rm span}_\mathbb{R}\{h_{n_1,\dots,n_N},n_1,\dots,n_N\in\mathbb{N}\}=H^s(\mathbb{R}^N,\mathbb{R}),$$
 where the closure is taken w.r.t. the $H^s$-norm.
\end{lemma}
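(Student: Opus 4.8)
The statement is that finite real linear combinations of the multidimensional Hermite functions $h_{n_1,\dots,n_N}$ are dense in $H^s(\mathbb R^N,\mathbb R)$ for every $s\geq 0$. The natural route is to exploit the fact that the Hermite functions are eigenfunctions of the harmonic oscillator $\mathcal H=-\Delta+|x|^2$, namely $\mathcal H h_{n_1,\dots,n_N}=(2|n|_1+N)h_{n_1,\dots,n_N}$ where $|n|_1=n_1+\dots+n_N$, and that the Sobolev spaces $H^s(\mathbb R^N)$ can be compared with the ``harmonic oscillator Sobolev spaces'' $\mathcal H^s:=D(\mathcal H^{s/2})$, i.e. the spaces of $f\in L^2$ with $\sum_n (2|n|_1+N)^s|\langle f,h_n\rangle|^2<\infty$. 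First I would record the classical fact that for $s\geq 0$ one has the continuous inclusion $\mathcal H^s(\mathbb R^N)\hookrightarrow H^s(\mathbb R^N)$ (indeed $\mathcal H^s$ is the intersection of $H^s$ with a weighted-$L^2$ space, so it embeds into $H^s$). This is where I would cite a standard reference on Hermite expansions (e.g. Thangavelu, or Helffer–Robert, or the fact that $\langle x\rangle^s$ and $\langle D\rangle^s$ are both dominated by $\mathcal H^{s/2}$ up to lower-order terms).

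The argument then has two steps. Step one: the span of the Hermite functions is dense in $\mathcal H^s$ with respect to the $\mathcal H^s$-norm. This is immediate from the spectral description: if $f\in\mathcal H^s$ with Hermite coefficients $c_n=\langle f,h_n\rangle$, then the partial sums $f_K=\sum_{|n|_1\leq K}c_n h_n$ satisfy $\|f-f_K\|_{\mathcal H^s}^2=\sum_{|n|_1>K}(2|n|_1+N)^s|c_n|^2\to 0$ as $K\to\infty$, since this is the tail of a convergent series. Step two: combine density in $\mathcal H^s$ with the inclusion $\mathcal H^s\hookrightarrow H^s$ and the density of $\mathcal H^s$ in $H^s$. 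For the latter, note that the Schwartz space $\mathcal S(\mathbb R^N)$ is contained in $\mathcal H^s$ (every Schwartz function has rapidly decaying Hermite coefficients) and is dense in $H^s(\mathbb R^N)$; hence $\mathcal H^s$ is dense in $H^s$. Now given $g\in H^s$ and $\varepsilon>0$, pick $\tilde g\in\mathcal H^s$ with $\|g-\tilde g\|_{H^s}<\varepsilon/2$, then a Hermite partial sum $\tilde g_K$ with $\|\tilde g-\tilde g_K\|_{\mathcal H^s}<\varepsilon/(2C)$ where $C$ is the norm of the embedding $\mathcal H^s\hookrightarrow H^s$; then $\|g-\tilde g_K\|_{H^s}<\varepsilon$, and $\tilde g_K$ lies in the real span of the Hermite functions. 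This proves $\overline{\operatorname{span}}_{\mathbb R}\{h_{n_1,\dots,n_N}\}=H^s$.

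An alternative, more self-contained route avoids naming $\mathcal H^s$: one can argue directly that $\mathcal S(\mathbb R^N,\mathbb R)$ is dense in $H^s(\mathbb R^N,\mathbb R)$, and that every real Schwartz function is approximated in every $H^s$-norm by its Hermite partial sums, using the fact that for Schwartz $f$ the quantity $\|f\|_{\mathcal H^k}$ is finite for all $k$ and that $H^s$-norms are controlled by $\mathcal H^k$-norms for $k\geq s$; a diagonal/interpolation argument then closes it. I would likely present the spectral version since it is cleanest.

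The main obstacle — really the only non-formal point — is the comparison between the standard Sobolev norm $\|\cdot\|_{H^s}$ and the harmonic-oscillator norm $\|\cdot\|_{\mathcal H^s}=\|\mathcal H^{s/2}\cdot\|_{L^2}$, i.e. establishing the continuous embedding $\mathcal H^s\hookrightarrow H^s$ (equivalently that $\langle D\rangle^s \mathcal H^{-s/2}$ is bounded on $L^2$). For integer $s$ this follows by expanding $\mathcal H^{s}=(-\Delta+|x|^2)^s$ and bounding each monomial in $\partial$ and $x$ by $\mathcal H^s$ via commutator estimates and the classical inequalities $\|\partial^\alpha f\|_{L^2}^2, \||x|^{|\alpha|}f\|_{L^2}^2 \lesssim \langle \mathcal H^{|\alpha|} f, f\rangle$; for general real $s\geq 0$ one interpolates, or simply invokes the standard theory of Shubin/Hermite–Sobolev spaces. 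In the write-up I would state this comparison as a known fact with a reference rather than reprove it, since the paper only needs density and not sharp norm equivalence.
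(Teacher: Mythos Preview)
Your argument is correct, but it follows a genuinely different route from the paper's. The paper proceeds by an orthogonality argument on the Fourier side: it takes $f\in H^s$ orthogonal (in the $H^s$ inner product) to every $h_{\vec n}$, passes to the Fourier transform, and uses the key fact that Hermite functions are eigenfunctions of $\mathcal F$, namely $\mathcal F[h_{\vec n}]=(-i)^{|n|_1}h_{\vec n}$. This turns the $H^s$-orthogonality into orthogonality of $\widehat f$ to all $h_{\vec n}$ in a polynomially weighted $L^2$ space, whence $\widehat f=0$ by completeness of the Hermite system there. So the paper's proof rests on the Fourier-eigenfunction identity rather than on the harmonic oscillator.

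Your approach via the Hermite--Sobolev (Shubin) spaces $\mathcal H^s=D((-\Delta+|x|^2)^{s/2})$ is equally valid and perhaps more conceptual: density in $\mathcal H^s$ is immediate from spectral theory, and the two remaining ingredients (the continuous embedding $\mathcal H^s\hookrightarrow H^s$ and the density $\mathcal S\subset\mathcal H^s\subset H^s$) are standard. The trade-off is that the paper's argument is essentially self-contained modulo one completeness assertion, whereas yours imports the embedding $\mathcal H^s\hookrightarrow H^s$ as a black box (or, as you note, proves it for integer $s$ via commutators and interpolates). Both arguments ultimately hinge on a single ``external'' fact of comparable depth; yours has the advantage of making transparent \emph{why} the convergence happens (tail decay of a convergent spectral series), while the paper's has the advantage of never leaving $H^s$ itself.
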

\begin{proof}
Let $f\in H^s(\R^N,\R)$ be such that 
$$\langle(1+(-\Delta)^{s/2})f,(1+(-\Delta)^{s/2})h_{\vec{n}} \rangle_{L^2}=0,\quad \forall n\in\N.$$ 
We prove the statement by showing that, necessarily, $f=0$. Denote by $\mathcal{F}$ the Fourier transform: being an isometry in $L^2$, we have that 
$$\langle \mathcal{F}[(1+(-\Delta)^{s/2})f],\mathcal{F}[(1+(-\Delta)^{s/2})h_{\vec{n}}]\rangle_{L^2}=0,$$
We then compute
\begin{align*} &\langle \mathcal{F}[(1+(-\Delta)^{s/2})f],\mathcal{F}[(1+(-\Delta)^{s/2})h_{\vec{n}}]\rangle_{L^2}\\=&\langle(1+|\lambda|^{s/2})\mathcal{F}[f] ,(1+|\lambda|^{s/2})\mathcal{F}[h_{\vec{n}}] \rangle_{L^2} \\
=&(-i)^{\sum_{j=1}^N n_j}\langle(1+|\lambda|^{s/2})\widehat{f} ,(1+|\lambda|^{s/2})h_{\vec{n}} \rangle_{L^2},
\end{align*}
where we used that $\mathcal{F}[h_{\vec{n}}]=(-i)^{\sum_{j=1}^N n_j}h_{\vec{n}}$. We then have that $\widehat{f}:=\mathcal{F}[f]$ is in the orthogonal complement of ${\rm span}_\mathbb{R}\{h_{\vec{n}},\vec{n}\in\mathbb{N}^N\}$ w.r.t. the $L^2$-scalar product associated with the measure $(1+|\lambda|^{s/2})^2d\lambda$. Since the set of linear combinations of Hermite functions is dense in the $L^2$-space (also w.r.t. the weight $(1+|\lambda|^{s/2})^2$), we have that $\mathcal{F}[f]=0$. Hence, we conclude that $f=0$.
\end{proof}

 We introduce now an increasing sequence of vector subspaces of $L^2(\mathbb{R}^N,i\mathbb{R})$: define
$$\mathcal{H}_0={\rm span}_\mathbb{R}\{ih_{0,\dots,0}\}, $$
and then iteratively, for $j\geq 1$, $\mathcal{H}_j$ as the largest vector space whose elements can be written as
$$\phi_0+i\sum_{k=1}^N P_k\phi_k,\quad \phi_0,\phi_k\in\mathcal{H}_{j-1}. $$
Finally, we define the saturation space as $\mathcal{H}_\infty=\cup_{j=0}^\infty \mathcal{H}_j$.
\begin{lemma}\label{lem:density}
The vector space $\mathcal{H}_\infty$ is dense in $H^s(\mathbb{R}^N,i\mathbb{R}), s\geq 0$.
\end{lemma}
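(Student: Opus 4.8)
The plan is to show that $\mathcal{H}_\infty$ contains (the imaginary multiples of) every $N$-dimensional Hermite function $h_{n_1,\dots,n_N}$, and then to invoke Lemma~\ref{lem:density2} to conclude density in $H^s(\R^N,i\R)$. The natural strategy is induction on the total degree $|\vec{n}|=n_1+\dots+n_N$, exploiting the recursion relation for Hermite functions. Recall that in one variable $x\,h_n=\sqrt{(n+1)/2}\,h_{n+1}+\sqrt{n/2}\,h_{n-1}$ and $\frac{d}{dx}h_n=\sqrt{n/2}\,h_{n-1}-\sqrt{(n+1)/2}\,h_{n+1}$, so that $P_k h_{\vec{n}} = i\partial_{x_k}h_{\vec{n}}$ is, up to nonzero scalars, a linear combination of $h_{\vec{n}-e_k}$ and $h_{\vec{n}+e_k}$ (the first term being absent when $n_k=0$). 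The base case is immediate: $ih_{\vec{0}}\in\mathcal{H}_0$ by definition.

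For the inductive step, suppose $ih_{\vec{m}}\in\mathcal{H}_\infty$ for every multi-index with $|\vec{m}|\le L$; I want to produce $ih_{\vec{n}}$ for an arbitrary $\vec{n}$ with $|\vec{n}|=L+1$. Pick an index $k$ with $n_k\ge 1$ and set $\vec{m}=\vec{n}-e_k$, so $|\vec{m}|=L$ and $ih_{\vec{m}}\in\mathcal{H}_j$ for some $j$. Then $iP_k(ih_{\vec{m}})= -P_k h_{\vec{m}}$, which by the recursion is a nonzero scalar multiple of $h_{\vec{n}}$ plus (when $m_k\ge 1$) a scalar multiple of $h_{\vec{n}-2e_k}$; since $|\vec{n}-2e_k|=L-1\le L$, the term $ih_{\vec{n}-2e_k}$ already lies in $\mathcal{H}_\infty$, hence in $\mathcal{H}_{j'}$ for some $j'$. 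Taking $j''=\max\{j,j'\}$, both $ih_{\vec{m}}$ and $ih_{\vec{n}-2e_k}$ lie in $\mathcal{H}_{j''}$, so the combination $\phi_0 + i\sum_k P_k\phi_k$ with $\phi_k = ih_{\vec{m}}$ and $\phi_0$ chosen to cancel the lower-order piece (itself in $\mathcal{H}_{j''}$) exhibits $ih_{\vec{n}}$ as an element of $\mathcal{H}_{j''+1}\subset\mathcal{H}_\infty$. This closes the induction and shows $ih_{\vec{n}}\in\mathcal{H}_\infty$ for all $\vec{n}\in\N^N$.

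Finally, $\mathcal{H}_\infty\supset\mathrm{span}_\R\{ih_{\vec{n}}:\vec{n}\in\N^N\}$, and by Lemma~\ref{lem:density2} the $\R$-span of the $h_{\vec{n}}$ is dense in $H^s(\R^N,\R)$; multiplying by $i$ gives that $\mathrm{span}_\R\{ih_{\vec{n}}\}$ is dense in $H^s(\R^N,i\R)$, whence $\mathcal{H}_\infty$ is dense in $H^s(\R^N,i\R)$ for every $s\ge 0$.

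I expect the only genuinely delicate point to be bookkeeping: making sure that all the Hermite functions involved in a single step (the "parent" $h_{\vec{m}}$ and the "lower-order correction" $h_{\vec{n}-2e_k}$) can be placed in a common $\mathcal{H}_{j''}$, which is fine because $\mathcal{H}_\infty$ is a union of the increasing family $\{\mathcal{H}_j\}$ and only finitely many terms appear. One should also be slightly careful that $\phi_0$ (which must absorb the $h_{\vec{n}-2e_k}$ contribution) is itself required to lie in $\mathcal{H}_{j''}$, not $\mathcal{H}_{j''-1}$ — but this is exactly what the definition of $\mathcal{H}_{j''+1}$ permits, so no difficulty arises. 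The argument uses nothing about the $P_k$ beyond the three-term recursion, so it is robust.
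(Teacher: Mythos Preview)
Your proof is correct and follows essentially the same route as the paper: both use the three-term recursion $P_k h_{\vec{m}}=i\bigl(\sqrt{m_k/2}\,h_{\vec{m}-e_k}-\sqrt{(m_k+1)/2}\,h_{\vec{m}+e_k}\bigr)$ to show inductively that every $ih_{\vec{n}}$ lies in $\mathcal{H}_\infty$, and then invoke Lemma~\ref{lem:density2} to conclude. The only difference is cosmetic: the paper carries out the induction in dimension $N=1$ and appeals to the tensor product structure for general $N$, whereas you run the induction directly on the total degree $|\vec{n}|$ in $\R^N$ with explicit bookkeeping on the index $j$---your version is in fact the more transparent of the two.
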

\begin{proof}
We prove it for $N=1$ and the extension to higher dimensions is straightforward thanks to the tensor product structure. Thanks to Lemma \ref{lem:density2}, it is enough to show that 
\begin{equation}\label{eq:Hinfty}
\overline{\rm span}_{\mathbb{R}}\{ih_n,n\in\mathbb{N}\}\subset \mathcal{H}_\infty.
\end{equation}
From the recurrence relations
$$Ph_0=-\frac{i}{\sqrt{2}}h_1,\quad Ph_n=\sqrt{\frac{n}{2}}ih_{n-1}-\sqrt{\frac{n+1}{2}}ih_{n+1},\quad n\geq 1, $$
which can be derived straightforwardly by induction (using the definition \eqref{def:hermite}), we have
$$ih_1=iP(\sqrt{2}ih_0)\in\mathcal{H}_1,\quad ih_{n+1}=\sqrt{\frac{n}{n+1}}ih_{n-1}+iP\left(\sqrt{\frac{2}{n+1}}ih_n\right)\in\mathcal{H}_{n+1}, n\geq1, $$
and the conclusion follows.
\end{proof}


\section{Proof of Theorem \ref{thm:main-result}}\label{sec:proof}

Let us also recall that the concatenation $v*u$ of two scalar control laws $u:[0,T_1]\to \mathbb{R}^{N+1},v:[0,T_2]\to \mathbb{R}^{N+1} $ is the scalar control law defined on $[0,T_1+T_2]$ as follows
$$(v*u)(t)=\begin{cases}
u(t), & t\in[0,T_1]\\
v(t-T_1), & t\in(T_1,T_1+T_2],
\end{cases} $$
and the definition extends to controls with values in $\mathbb{R}^{N+1}$ componentwise. Note also that
$$\mathcal{R}(T_1+t,\psi_0,v*u)=\mathcal{R}(t,\mathcal{R}(T_1,\psi_0,u),v),\quad t>0. $$

Consider now the following property:
\begin{itemize}

\smallskip

\item[($P_n$)] Let $\psi_0\in H^s(\mathbb{R}^N,\mathbb{C})$ and $\phi\in\mathcal{H}_n$. For any $\varepsilon,T>0$, there exist $\tau\in[0,T)$ and $(u_0,u):[0,\tau]\to \mathbb{R}^{N+1}$ piecewise constant such that 
the solution $\psi(t;\psi_0)$ of \eqref{eq:schro} associated with the control $(u_0,u)$ and with the initial condition $\psi_0$ satisfies
\begin{align}\label{Pn}\|\psi(\tau;\psi_0)-e^{\phi}\psi_0\|_{H^s(\mathbb{R}^N)}< \varepsilon. \end{align}
\end{itemize}
It is clear that the validity of $(P_n)$ for every $n\in\N$ implies Theorem \ref{thm:main-result}, thanks to the density property proved in Lemma \ref{lem:density}. We are thus left to prove $(P_n)$: we do it by induction.

\textbf{Basis of induction: $n=0$}\\
If $\phi\in\mathcal{H}_0$, there exists $\alpha\in\mathbb{R}$ such that $\phi(x)=i\alpha h_{0,\dots,0}(x)$. Consider then the solution $\mathcal{R}(t,\psi_0,(-\alpha/\delta,0))$ of \eqref{eq:schro} associated with the constant control $(u_0,u)^{\delta,\alpha}:=(-\alpha/\delta,0)\in\mathbb{R}^{N+1}$ and with the initial condition $\psi_0$.
Applying Proposition \ref{lemma:limit-nonlinear1}, we find $\delta\in[0,T)$ such that
$$\|\mathcal{R}(\delta,\psi_0,(-\alpha/\delta,0))-\exp(i\alpha h_{0,\dots,0})\psi_0\|_{H^s(\R^N)}<\varepsilon. $$ 

\textbf{Inductive step: $n\Rightarrow n+1$}\\
Assuming that $(P_n)$ holds, we prove $(P_{n+1})$. If $\phi\in\mathcal{H}_{n+1}$, there exist $(\phi_j)_{j=0,..,N}\subset\mathcal{H}_n$ such that 
$$\phi=\phi_0+i\sum_{j=1}^N P_j\phi_j.$$ Let us start by considering the term $\phi_1$: thanks to Proposition \ref{lemma:main-tool}, we can fix $\gamma\in[0,T/3)$ small enough such that
$$\left\|\exp\left(\frac{\phi_1}{\gamma}\right)\exp(-i\gamma P_1)\exp\left( -\frac{\phi_1}{\gamma}\right)\psi_0-\exp(iP_1\phi_1)\psi_0\right\|_{H^s(\R^N)}< \varepsilon/2.  $$

Thanks to the inductive hypothesis, for any $\epsilon,T,\gamma>0$, there exist $\delta_1\in[0,T/3)$ and a piecewise constant control $(u_0,u)^{\delta_1,\gamma}:[0,\delta_1]\to \mathbb{R}^{N+1}$ such that 
\begin{equation}\label{eq:first-impulsion}
 \left\|
\mathcal{R}(\delta,\psi_0,(u_0,u)^{\delta_1,\gamma})-\exp\left(-\frac{\phi_1}{\gamma}\right)\psi_0\right\|_{H^s(\R^N)}< \epsilon. 
\end{equation}
We now consider a constant control $(u_0,u)^{\delta_2,\gamma}=(0,e_1\gamma/\delta_2):[0,\delta_2]\to\mathbb{R}^{N+1}$: thanks to Proposition \ref{lemma:limit-nonlinear1}, we can find $\delta_2\in[0,T/3)$ such that
\begin{align*}
& \left\|
\mathcal{R}(\delta_1+\delta_2,\psi_0,(u_0,u)^{\delta_2,\gamma}*(u_0,u)^{\delta_1,\gamma})-\exp(-i\gamma P_1)\exp\left(-\frac{\phi_1}{\gamma}\right)\psi_0\right\|_{H^s(\R^N)}\\
\leq &\left\|
\mathcal{R}(\delta_2,\mathcal{R}(\delta_1,\psi_0,(u_0,u)^{\delta_1,\gamma}),(u_0,u)^{\delta_2,\gamma})\!-\!\exp(-i\gamma P_1)\mathcal{R}(\delta_1,\psi_0,(u_0,u)^{\delta_1,\gamma})\right\|_{H^s(\R^N)}\\
+ & \left\|
\exp(-i\gamma P_1)\mathcal{R}(\delta_1,\psi_0,(u_0,u)^{\delta_1,\gamma})-\exp(-i\gamma P_1)\exp\left(-\frac{\phi_1}{\gamma}\right)\psi_0\right\|_{H^s(\R^N)}<2\epsilon.
\end{align*}
Now, we use again the inductive hypothesis to deduce that there exist $\delta_3\in[0,T/3)$ and a piecewise constant control $(u_0,u)^{\delta_3,\gamma}:[0,\delta_3]\to \mathbb{R}^{2}$ such that
\begin{align*}
&\Big\|\mathcal{R}\left(\delta_3,\exp(-i\gamma P_1)\!\exp\!\left(\!\!-\frac{\phi_1}{\gamma}\right)\!\psi_0,(u_0,u)^{\delta_3,\gamma}\right)\!\\
&-\!\exp\!\left(\frac{\phi_1}{\gamma}\right)\!\exp\!(-i\gamma P_1)\!\exp\!\left(-\frac{\phi_1}{\gamma}\right)\!\psi_0\Big\|_{H^s(\R^N)}
<\epsilon. 
\end{align*}
Then, thanks to Proposition \ref{prop:well}, there exists $C>0$ such that
\begin{align*}
& \left\|
\mathcal{R}(\delta_1+\delta_2+\delta_3,\psi_0,(u_0,u)^{\delta_3,\gamma}*(u_0,u)^{\delta_2,\gamma}*(u_0,u)^{\delta_1,\gamma})-\exp(iP_1\phi_1)\psi_0\right\|_{H^s(\R^N)}\\
\leq & \Big\| \mathcal{R}(\delta_3,\mathcal{R}(\delta_1+\delta_2,\psi_0,(u_0,u)^{\delta_2,\gamma}*(u_0,u)^{\delta_1,\gamma}),(u_0,u)^{\delta_3,\gamma})\\
&-\mathcal{R}\left(\delta_3,\exp(-i\gamma P_1)\exp\left(-\frac{\phi_1}{\gamma}\right)\psi_0,(u_0,u)^{\delta_3,\gamma}\right) \Big\|_{H^s(\R^N)}\\
+&\Big\|\mathcal{R}\left(\delta_3,\exp(-i\gamma P_1)\exp\left(-\frac{\phi_1}{\gamma}\right)\psi_0,(u_0,u)^{\delta_3,\gamma}\right)\\
&-\exp\left(\frac{\phi_1}{\gamma}\right)\exp(-i\gamma P_1)\exp\left(-\frac{\phi_1}{\gamma}\right)\psi_0\Big\|_{H^s(\R^N)}\\
+&\left\|\exp\left(\frac{\phi_1}{\gamma}\right)\exp(-i\gamma P_1)\exp\left( -\frac{\phi_1}{\gamma}\right)\psi_0-\exp(iP_1\phi_1)\psi_0\right\|_{H^s(\R^N)}\\
< & C\epsilon+2\epsilon+\varepsilon/2.
\end{align*}

Choosing $\epsilon>0$ small enough such that $C\epsilon+2\epsilon<\varepsilon/2$, we have then proved that the piecewise constant control $$(u_0,u):=(u_0,u)^{\delta_3,\gamma}*(u_0,u)^{\delta_2,\gamma}*(u_0,u)^{\delta_1,\gamma}$$ steers $\psi_0$, the initial state, $\varepsilon$-close to the target $\exp(iP_1\phi_1)\psi_0$ in the time $\tau:=\delta_1+\delta_2+\delta_3<T$.

Finally, the argument for generating the other $\exp(iP_j\phi_j)\psi_0$ is completely identical. To conclude, by the inductive hypothesis, there exists a piecewise constant control $(u_0,u)$ steering the state $\exp(i\sum_{j=1}^NP_j\phi_j)\psi_0$ arbitrarily close to the state 
$$\exp(\phi_0) \exp\left(i\sum_{j=1}^NP_j\phi_j\right)\psi_0=\exp\left(\phi_0+i\sum_{j=1}^NP_j\phi_j\right)\psi_0=\exp(\phi)\psi_0$$
 in arbitrarily small times. This fact concludes the proof of the property $(P_n)$.\\

Corollary \ref{thm:eigenmodes} is then a direct consequence of Theorem \ref{thm:main-result}: it suffices to consider $$\phi(x)=(\nu-\xi)x\rho_S(x),$$
where $\rho_S$ is any smooth function with compact support such that $S\subset{\rm supp}(\rho_S)$ and $\rho_S(x)= 1$ on $S$.

\bibliographystyle{siamplain}
\bibliography{references}

\end{document}